\title{Matroidal representations of groups} 
\author{Noah Giansiracusa}
\author{Jacob Manaker} 
\date{\today}
\DeclareSymbolFont{cmlargesymbols}{OMX}{cmex}{m}{n}
\DeclareMathSymbol{\mycoprod}{\mathop}{cmlargesymbols}{"60}
\DeclareFontFamily{OT1}{pzc}{}
\DeclareFontShape{OT1}{pzc}{m}{it}{<-> s * [1.10] pzcmi7t}{}
\DeclareMathAlphabet{\mathpzc}{OT1}{pzc}{m}{it}
\tikzset{bdot/.style={circle, thick, draw=black, top color=black!20, bottom color=black}, 
              wdot/.style={circle, thick, draw=black!20, top color=white, bottom color=white}}
\numberwithin{equation}{subsection}
\newtheorem{theorem}{Theorem}[subsection]  
\newtheorem{lemma}[theorem]{Lemma} 
\newtheorem{proposition}[theorem]{Proposition}
\newtheorem{corollary}[theorem]{Corollary}
\newtheorem{conjecture}[theorem]{Conjecture}
\newtheorem{thmx}{Theorem}
\theoremstyle{remark} 
\newtheorem{remark}[theorem]{Remark}
\newtheorem{definition}[theorem]{Definition}
\newtheorem{example}[theorem]{Example}
\newcommand{\Hom}{\mathrm{Hom}}
\newcommand{\Aut}{\mathrm{Aut}}
\newcommand{\GL}{\mathrm{GL}}
\newcommand{\B}{\mathbb{B}}
\newcommand{\C}{\mathbb{C}}
\newcommand{\R}{\mathbb{R}}
\newcommand{\N}{\mathbb{N}}
\newcommand{\Z}{\mathbb{Z}}
\newcommand{\PP}{\mathbb{P}}
\newcommand{\T}{\mathbb{T}}
\newcommand{\trop}{\mathpzc{trop}}
\newcommand{\ord}{\operatorname{ord}}
\newcommand{\ext}{{\bigwedge}}
\newcommand*{\transpose}{\,\raisebox{4pt}{\ensuremath{\intercal}}}
\newcommand{\field}{\mathbbm{k}}
\newcommand{\semifield}{\mathbb{S}}
\newcommand*\incircbin{\mathpalette\@incircbin}
\newcommand*\@incircbin[2]{\mathbin{\ooalign{\hidewidth$#1#2$\hidewidth\crcr$#1\ocircle$}}}
\newcommand*\dsh[1][1]{\dindex=1%
\loop\ensuremath{\dabar@}%
\ifnum\dindex<#1%
\advance\dindex by 1%
\repeat%
}
\newcommand*\vblock{\rotatebox[origin=c]{90}{\ensuremath{\dabar@\dabar@\dabar@}}}
\newcommand{\ALERT}[1]{{\color{red}[#1]}}
\begin{document}

\maketitle

\begin{abstract}
We develop the rudiments of a finite-dimensional representation theory of groups over idempotent semifields by considering linear actions on tropical linear spaces.  This can be considered a tropical representation theory, a characteristic one modular representation theory, or a matroidal representation theory---and we draw from all three perspectives.  After some general properties and constructions, including a weak tropical analogue of Maschke's theorem, we turn to a study of the regular representation of a finite group and its tropicalization.  For abelian groups we find an interesting interplay between elementary number theory and matroid theory---even cyclic groups are surprisingly rich---and we conclude with some possible first steps toward a tropical character theory.
\end{abstract}

\section{Introduction}

The algebraic approach to tropical geometry and matroid theory that has been emerging in recent years \cite{Frenk,GG1,GG3,Maclagan-Rincon-ideals,Lorscheid-tropical,Jaiung-hyperrings,Crowley-Giansiracusa-Mundinger,Baker-Bowler} provides a framework in which to develop the rudiments of a ``characteristic one'' modular representation theory, meaning representation theory over an idempotent (i.e., $1+1=1$) semifield $\semifield$.  The goal of this paper is to embark on such a journey.  

The idea is to consider linear group actions on tropical linear spaces (equivalently, $\semifield$-valued matroids), not just free modules.  This allows for the incorporation of constructions from tropical geometry---such as the stable intersection and stable sum, as well as the tropicalization of certain classical representations over a field---and it provides a novel setting in which group theory and matroid are intriguingly interwoven.  Our main results and examples concern the regular representation for finite groups $G$ and show that both the matroidal structure of classical representations and the intrinsic $G$-invariant tropical linear geometry reflect significant group-theoretic structure of $G$; even the case of cyclic groups appears surprisingly rich.  We believe we are only scratching the surface here and that there is much more to be discovered in this direction.

\subsection{Summary of results}

Every linear automorphism of a free module over a semifield $\semifield$ is given by a \emph{monomial} matrix (sometimes called a \emph{generalized permutation} matrix), so a group homomorphism $G \rightarrow \GL(\semifield^n)$ (which we will call a \emph{linear representation} of $G$ over $\semifield$) is a mild generalization of a permutation representation in which we are allowed to rescale the basis vectors.  Given such a homomorphism, we define a \emph{tropical representation} to be a $G$-invariant tropical linear space in $\semifield^n$.  The following summarizes our basic results on this setup; no individual item on this list is particularly difficult or deep, but in tandem this provides a healthy beginning to the subject.

\newpage

\begin{thmx}
Fix a group $G$, idempotent semifield $\semifield$, and linear representation $G \rightarrow \GL(\semifield^n)$.
\begin{enumerate}
\item For any $1 \le d \le n$, the induced linear representation on $\ext^d\semifield^n$ restricts to a linear action on the Dressian $Dr(d,n) \subseteq \PP(\ext^d\semifield^n)$, and $d$-dimensional tropical representations in $\semifield^n$ are equivalent to $G$-fixed points in $Dr(d,n)$.
\item If $\semifield = \{0,1\}$, a tropical representation $T\subseteq \semifield^n$ is equivalent to a homomorphism from $G$ to the automorphism group of the matroid corresponding to $T$.
\item The stable sum and stable intersection of tropical representations are tropical representations, and if $\semifield$ is totally ordered then the perpendicular (i.e., valuated matroid dual) of a tropical representation is a tropical representation. 
\item Given an $n$-dimensional monomial representation $V$ of $G$ over a valued field $\nu : k \rightarrow \semifield$, matrix entry-wise valuation yields a linear representation over $\semifield$ and any subrepresentation of $V$ tropicalizes to a tropical representation in $\semifield^n$; this tropicalization operation is $G$-equivariant.
\end{enumerate}
\end{thmx}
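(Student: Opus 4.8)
The unifying principle behind all four parts is that the relevant tropical-geometric constructions are \emph{natural}, hence equivariant, with respect to the action of monomial matrices, and that a monomial matrix acts on the index set $[n]$ by a combination of a relabeling (its underlying permutation) and a rescaling by units of $\semifield$. Both of these are isomorphisms of valuated matroids, so they preserve every canonically defined piece of structure. I would fix notation once by writing $g \mapsto (\sigma_g, c_g)$ for the permutation and the tuple of units attached to each $g \in G$, and record that the induced map on $\ext^d \semifield^n$ sends the standard basis vector $e_S$ (for $S \in \binom{[n]}{d}$) to $\big(\prod_{i\in S} c_{g,i}\big)\, e_{\sigma_g(S)}$, i.e.\ it is again monomial. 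The single technical fact I would isolate and reuse is that passage to $\semifield$ (and the valuation in part (4)) is multiplicative but \emph{not} additive; the reason everything goes through for monomial representations is that the operations in play only ever multiply single coordinates together and never add two nonzero quantities, so no cancellation can occur and the maps commute on the nose.

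For (1), I would first argue that the action restricts to the Dressian: since each $g$ acts on $[n]$ by a relabeling and a rescaling by units, and both are valuated-matroid isomorphisms, the induced monomial action on $\PP(\ext^d \semifield^n)$ carries solutions of the tropical Plücker relations to solutions, hence preserves $Dr(d,n)$. I would then recall the standard dictionary assigning to a $d$-dimensional tropical linear space $L \subseteq \semifield^n$ its Plücker point $[p_L] \in Dr(d,n)$, and check that $L \mapsto [p_L]$ is $G$-equivariant, which is immediate from the functoriality of $\ext^d$. With equivariance in hand, $gL = L$ for all $g$ translates exactly into $[p_L]$ being a $G$-fixed point, giving the claimed equivalence.

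Parts (2) and (3) are then corollaries of the same equivariance philosophy. For (2) I would observe that over $\semifield = \{0,1\}$ the only unit is $1$, so $\GL(\semifield^n)$ is the symmetric group $S_n$ and a linear representation is a permutation representation, while valuated matroids over this semifield are just ordinary matroids. A tropical representation is then a matroid $M$ on $[n]$ together with the requirement that each $\sigma_g$ be an automorphism of $M$, which is precisely a homomorphism $G \to \Aut(M)$. For (3) I would record that the stable sum, the stable intersection, and (when $\semifield$ is totally ordered) the valuated dual are all equivariant for the monomial action: the stable operations are given by natural tropical formulas in the Plücker coordinates, while the dual is obtained by complementing index sets, which commutes with $\sigma_g$, the rescaling factors over $S$ and over $[n]\setminus S$ combining to a global unit that is invisible projectively. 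Hence applying any of these operations to $G$-invariant inputs yields a $G$-invariant output; I would flag that the total-order hypothesis is exactly what is needed to define the valuated dual, and that one must check the dual stays invariant under the same (rather than the contragredient) action, which again reduces to the scalars recombining into a global unit.

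The substance of the theorem is concentrated in (4), and this is where I expect the only real work. First I would verify that entrywise valuation sends a monomial matrix over $k$ to a monomial matrix over $\semifield$ and is multiplicative on such matrices: because each factor has a single nonzero entry per row and column, the $(i,j)$ entry of a product is a single product of field elements rather than a sum, so $\nu$ applied entrywise commutes with multiplication and the composite $G \to \GL_n(k) \to \GL(\semifield^n)$ is a group homomorphism. This is the precise point at which the ``multiplicative but not additive'' obstacle would derail a non-monomial argument. Next I would invoke the standard fact that the tropicalization of a linear subspace $W \subseteq k^n$, obtained by applying $\nu$ to the Plücker vector of $W$, lands in the Dressian and is a tropical linear space. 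Finally, for equivariance, I would compute that the Plücker vector of $gW$ is $\ext^{d} g$ applied to that of $W$; since $\ext^d g$ is monomial, $\nu$ again commutes with it, so $\trop(gW) = \overline{g}\cdot \trop(W)$ where $\overline{g}$ is the valuated monomial matrix. When $W$ is a subrepresentation this makes $\trop(W)$ invariant under the $G$-action over $\semifield$, i.e.\ a tropical representation, and the displayed identity is exactly the asserted $G$-equivariance of tropicalization.
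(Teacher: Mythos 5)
Your overall route is essentially the paper's: decompose each $g$ as a permutation $\sigma_g$ times a diagonal matrix of units, observe that the induced map on $\ext^d\semifield^n$ is again monomial, let scalars factor out of every relation in sight, and use multiplicativity of the valuation on monomial matrices for part (4). Parts (1), (2), (4), and the stable sum/stable intersection claims of (3) track the paper's Propositions \ref{prop:DressAct} and \ref{prop:matroid}, Theorem \ref{thm:stable}, and the tropicalization lemmas, modulo compression.

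There is, however, one step that would fail as written: your parenthetical in (3) asserting that one must check the dual stays invariant under ``the same (rather than the contragredient) action.'' This is backwards. Over a totally ordered $\semifield$ with nontrivial units, the perpendicular $T^\perp\subseteq V^\vee$ is invariant precisely for the \emph{dual} (contragredient) action, and it is under that action that the rescaling factors over $S$ and over $[n]\setminus S$ recombine into the global unit $\det(g)$; this is the content of the paper's Lemma \ref{lem:star}, $g\cdot\star v = \frac{1}{\det(g)}\star(g\cdot v)$, and Theorem \ref{thm:stable} accordingly states the perpendicular as a subrepresentation of $G\to\GL(V^\vee)$. If instead you transport $T^\perp$ back to $V$ by the basis identification $x_i\mapsto e_i$ and act by the same matrices, invariance genuinely fails: take $\semifield=\T$ and $G=\Z_2$ acting on $\semifield^2$ by the swap composed with $\diag(\lambda,\lambda^{-1})$, $\lambda\neq 1$. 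Then $v=e_1+\lambda e_2$ is a fixed rank-one Pl\"ucker vector, but its perpendicular $\lambda x_1+x_2$, transported to $\lambda e_1+e_2$, is sent to $\lambda^{-1}e_1+\lambda^{2}e_2$, which is not projectively equal to it. Over $\B$ all units are $1$ and the distinction evaporates, which is presumably what your intuition was tracking; but the theorem is asserted over arbitrary (totally ordered) $\semifield$, so the bookkeeping matters.

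Two smaller glosses are worth repairing. In (1), the equivalence between invariant tropical linear spaces and fixed Pl\"ucker points is not ``immediate from functoriality of $\ext^d$'': one needs $T_{g\cdot v}=g\cdot T_v$, which the paper proves by checking that $g\cdot T_v$ lies in the hyperplanes of the valuated circuits of $g\cdot v$ and then invoking that a containment of tropical linear spaces of equal rank is an equality. Relatedly, your justification that the action preserves $Dr(d,n)$ (``relabeling and rescaling are valuated-matroid isomorphisms'') assumes what is to be shown: over a merely partially ordered $\semifield$ the Pl\"ucker relations are bend relations, and the actual verification (Proposition \ref{prop:DressAct}) is that the scalars contribute the common factor $\lambda_A\lambda_B$ to every term of each relation, so any term may still be omitted. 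In (2), passing from ``$T$ is $G$-invariant'' to ``each $\sigma_g$ is an automorphism of $M(T)$'' needs the elementary Lemma \ref{lem:MatAut}: $T$ is the span of the cocircuit indicator vectors, while matroid automorphisms are defined via independent sets, so one must know that a permutation is an automorphism if and only if it preserves cocircuits.
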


With this setup in hand, we prove a weak tropical analogue of Maschke's Theorem when $G$ is a finite group: For any tropical representation $T\subseteq \semifield^n$, there is a tropical representation in $\semifield^n$ of the complementary dimension obtained by averaging the standard inner product on $\semifield^n$ over $G$ and applying the induced isomorphism $(\semifield^n)^\vee \cong \semifield^n$ to the perpendicular tropical linear space $T^\perp \subseteq (\semifield^n)^\vee$.

We next turn to a study of the regular representation $\C[G]$ of a finite group $G$ and its constant-coefficient tropicalization $\B[G]$, where $\B = \{0,1\}$ is the idempotent Boolean semifield alluded to above in item (2).  In this context we call a tropical representation in $\B[G]$ \emph{realizable} if it is the tropicalization of a subrepresentation of $\C[G]$.  The following summarizes our results in this direction; some of these require significantly more sophisticated methods than the results listed above.  In what follows we blur the distinction between a tropical linear space and the corresponding matroid.
\begin{thmx}
Fix a finite group $G$.  All tropical representations considered below are in the Boolean regular representation $\B[G]$.
\begin{enumerate}
\item For any $1\le d \le |G|$, the uniform matroid $U_{d,G}$ of rank $d$ on ground set $G$ is a tropical representation.
\item The only dimension one and codimension one tropical representations are uniform matroids; both of these are realizable.
\item If $G$ is abelian, then $G$ is cyclic if and only if the tropical representation $U_{2,G}$ is realizable.
\item If $G\cong\Z_p$ is cyclic of prime order, then for each $1 \le d \le p$ the uniform matroid $U_{d,G}$ is realizable; moreover, these uniform matroids are the only realizable tropical representations.
\item Let $p$ be an odd prime satisfying the following condition: either 2 is a primitive root of $p$, or $p \equiv 7~(\mathrm{mod}~8)$ and 2 has order $\frac{p-1}{2}$ in $\Z_p^*$.  Then the only two-dimensional representation of $\Z_p$ is $U_{2,\Z_p}$.
\end{enumerate}
\end{thmx}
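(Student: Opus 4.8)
The plan is to run everything through Theorem~A(2): over $\B=\{0,1\}$ a tropical representation in $\B[G]$ is exactly a matroid $M$ on the ground set $G$ whose automorphism group contains the image of the left-translation homomorphism $G\hookrightarrow\sym(G)$, and when $G\cong\Z_p$ a generator acts as a single $p$-cycle. Item~(1) is then immediate, since the uniform matroid $U_{d,G}$ is invariant under every permutation of its ground set and hence a fortiori under all left translations. For the rank-one half of item~(2), a loopless rank-one matroid is a single parallel class and a general one is this together with a set of loops; invariance forces the loop set to be translation-invariant, hence empty (rank one rules out all of $G$), so $M=U_{1,G}$. I would then obtain the codimension-one half by dualizing: $\B$ is totally ordered, so by Theorem~A(3) the matroid dual of a tropical representation is again one, and duality preserves automorphisms, so a corank-one invariant matroid dualizes to $U_{1,G}$ and is therefore $U_{|G|-1,G}$. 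Realizability is witnessed by genuine subrepresentations of $\C[G]$: the trivial line spanned by $\sum_{g\in G}g$ has full support, so it tropicalizes to $U_{1,G}$, and the augmentation subrepresentation $\{\sum a_g g:\sum_g a_g=0\}$ is the orthogonal complement of that line, so its matroid is the dual $U_{|G|-1,G}$.

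Item~(3) reduces to a character computation. For abelian $G$ the group algebra splits as $\C[G]=\bigoplus_\chi\langle v_\chi\rangle$ with $(v_\chi)_g=\chi(g)$, so any two-dimensional subrepresentation is $\langle v_{\chi_1},v_{\chi_2}\rangle$ for distinct characters, and its matroid is $U_{2,G}$ precisely when every $2\times 2$ minor $\chi_1(g)\chi_2(h)-\chi_1(h)\chi_2(g)$ is nonzero. This minor vanishes iff $(\chi_1\chi_2^{-1})(gh^{-1})=1$, so nonvanishing for all $g\ne h$ is equivalent to $\chi_1\chi_2^{-1}$ being faithful. Hence $U_{2,G}$ is realizable iff $G$ admits a faithful character, i.e.\ embeds into $\C^\times$, i.e.\ is cyclic (take $\chi_2$ trivial for the forward implication). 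For item~(4) the same decomposition shows every subrepresentation of $\C[\Z_p]$ is $W_S=\bigoplus_{j\in S}\langle v_{\chi_j}\rangle$, whose matroid is the column matroid of the Fourier submatrix $[\zeta^{jk}]_{j\in S,\,k\in\Z_p}$ with $\zeta=e^{2\pi i/p}$. Choosing $S=\{0,1,\dots,d-1\}$ turns every maximal minor into a Vandermonde determinant $\prod_{k<k'}(\zeta^{k}-\zeta^{k'})\ne 0$, so $U_{d,\Z_p}$ is realizable for each $d$; conversely Chebotarev's theorem---that every minor of the prime-order Fourier matrix is nonzero---shows that every $W_S$ has all maximal minors nonzero and so is uniform. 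Thus the realizable tropical representations are exactly the $U_{d,\Z_p}$.

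Item~(5) is the deep one, and in contrast to item~(4) it concerns all two-dimensional tropical representations, not only the realizable ones. The plan is to show that, under the stated arithmetic hypothesis, every two-dimensional tropical representation of $\Z_p$ is forced to be $U_{2,\Z_p}$. Organizing invariant rank-two data by the $\tfrac{p-1}{2}$ orbits of pairs $\{i,i+\delta\}$ under translation, a non-uniform invariant configuration would single out a proper, translation-stable family of such orbits, so the heart of the matter is to show that the tropical Plücker relations together with the cyclic structure leave no room for this. I would translate the putative existence of such a configuration into an arithmetic condition read off from the factorization of $x^p-1$ (equivalently $\Phi_p$) over $\F_2$, whose number of irreducible factors is $(p-1)/\ord_p(2)$, refined by the quadratic-residue data encoded in $p\bmod 8$ (whether $2$ and $-1$ are squares mod $p$). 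The main obstacle is precisely this number-theoretic core: one must show that $\ord_p(2)$ being as large as the hypothesis demands---equal to $p-1$ when $2$ is a primitive root, or to $\tfrac{p-1}{2}$ in the $p\equiv 7\ (\mathrm{mod}\ 8)$ regime---excludes every non-uniform candidate, with the borderline case requiring the extra congruence $p\equiv 7\ (\mathrm{mod}\ 8)$ (equivalently, $2$ a quadratic residue and $-1$ a nonresidue) to rule out the configurations that the order condition alone would still permit.
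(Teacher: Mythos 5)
Items (1)--(4) of your proposal are correct and essentially match the paper's arguments: (1) is the invariance of the uniform Pl\"ucker vector $\sum_{I\in\binom{G}{d}}e_I$; your rank-one argument via loops and parallel classes is a matroid-level rephrasing of the paper's observation that transitivity of left translation forces the spanning vector to be $\sum_g e_g$; the corank-one half via duality (the paper's Lemma \ref{lem:star}) and the augmentation subrepresentation for realizability are exactly what the paper does, except that the paper verifies the augmentation matroid by exhibiting an explicit matrix with all maximal minors nonzero while you invoke $M(L^\perp)=M(L)^*$ --- both are fine. Item (3) is the identical computation: the $2\times 2$ minor condition reduces, via the group structure on characters, to the existence of a faithful one-dimensional character, hence cyclicity. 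In item (4) you make a small improvement: choosing consecutive exponents $S=\{0,\dots,d-1\}$ turns every maximal minor into a Vandermonde determinant in distinct roots of unity, so realizability of $U_{d,\Z_p}$ does not need Chebotarev (the paper invokes Chebotarev for both directions); Chebotarev remains essential for the converse, as you say.

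Item (5), however, has a genuine gap: what you give is a plan, not a proof, and the proposed mechanism is not the one that works. You correctly reduce to deciding which sums of the $\tfrac{p-1}{2}$ translation orbits $f_\delta$ (pairs at difference $\pm\delta$) satisfy the tropical Pl\"ucker relations, but you never explain how those relations constrain the orbit set. The paper's key step is a concrete forcing argument from strong basis exchange: if $f_i$ appears, apply exchange to the basis $\{0,i\}$ with the third element $m=\frac{i}{2}$ when $i$ is even, or $m=\frac{p+i}{2}$ when $i$ is odd; these choices are engineered so that \emph{both} candidate replacement pairs $\{0,m\}$ and $\{i,m\}$ lie in a single orbit ($f_{i/2}$, resp.\ $f_{(p-i)/2}$), which is therefore forced regardless of which exchange succeeds. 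Iterating yields the map $n\mapsto \frac{n}{2}$ ($n$ even), $n\mapsto\frac{p-n}{2}$ ($n$ odd) on $\{1,\dots,\frac{p-1}{2}\}$, and the theorem reduces to the dynamical statement (the paper's Lemma \ref{lem:MO}, due to Zaimi) that this permutation is transitive iff the stated arithmetic condition holds; the proof inverts the map to the iteration $1\mapsto\pm2\mapsto\pm4\mapsto\cdots$ in $\Z_p^*$ and asks whether $2^r\equiv\pm 1\pmod p$ for some $0<r<\frac{p-1}{2}$. Your sketch via the factorization of $\Phi_p$ over $\F_2$ gestures at the right quantity --- the number of irreducible factors $(p-1)/\ord_p(2)$ counts orbits of multiplication by $2$ on $\Z_p^*$ --- but the relevant object is the action of $\langle\pm 2\rangle$ on $\Z_p^*/\{\pm 1\}$ (which is exactly where the $p\equiv 7\pmod 8$ refinement, i.e.\ $2$ a residue and $-1$ a nonresidue, enters), and more importantly nothing in your outline connects this arithmetic to the matroid axioms. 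Without the exchange-forcing step the number theory has nothing to act on, so as written item (5) is unproved.
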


Items (1) and (2) are straightforward to prove.  Item (3) relies on classical character theory.  Item (4) follows almost immediately from Chebotarev's theorem (originally Ostrowski's conjecture) on roots of unity.  We also adapt a modern proof of Chebotarev's theorem \cite{Vandermonde} to produce a sufficient condition for a given basis to exist in the matroid associated to a given subrepresentation of $\B[\Z_{p^r}]$.  The hypothesis for item (5) is an artifact of our proof technique: $p=17$ is the first prime not satisfying the stated condition and we checked using SAGE that in dimension two $\B[\Z_{17}]$ only has the uniform matroid.  On the other hand, $\Z_4$ and $\Z_6$ have non-uniform representations in dimension two.  Based on these observations, we conjecture that a cyclic group $\Z_n$ has prime order if and only if $U_{2,\Z_n}$ is the only two-dimensional tropical representation in $\B[\Z_n]$.

We do not know if a reasonable notion of irreducibility exists for tropical representations.  However, in the context of a fixed linear representation of $G$, such as the Boolean regular representation $\B[G]$, we can call a tropical representation \emph{indecomposable} if it is not the stable sum of two representations of smaller dimension.  Since the stable sum corresponds to the matroid union, and the matroid union of uniform matroids is uniform, it follows from item (2) above that every non-uniform tropical representation in dimension two is indecomposable.  Indecomposability in higher dimensions is more complicated.

Using the preceding results and direct ad hoc calculations, we study the Boolean regular representation for groups of order at most six, finding in particular the following. 
\begin{enumerate}
\item[$\Z_4:$] There is one non-uniform tropical representation in dimension two; it is realized by the direct sum of the trivial and sign representations.
\item[$\Z_2\times \Z_2$:] Since this is not cyclic, $U_{2,\Z_2\times\Z_2}$ is not realizable.  There are three non-uniform tropical representations in dimension two; all three of these are realizable.
\item[$\Z_5:$] The only tropical representation in each dimension is the uniform matroid.
\item[$\Z_6:$] There are two non-uniform tropical representations in dimension two; both of these are realizable.  There are two non-uniform representations in dimension three, one of which is indecomposable.
\item[$\Sigma_3:$] There are four non-uniform tropical representations in dimension two, all of which are realizable.  One of these is realized by both the reducible representation $V_{\text{triv}}\oplus V_{\text{sign}}$ and the irreducible representation $V_{std}$.  The uniform matroid $U_{2,\Sigma_3}$ is also realizable, showing that the abelian hypothesis in item (3) above is essential.  
\end{enumerate}

We conclude the paper by offering a definition of a tropical character and computing a proposed character table for $\Z_4$, $\Z_2\times \Z_2$, and $\Z_5$.  The challenge in finding a tropical analogue of character theory is that tropical linear spaces are not free modules so it's not clear what trace should mean.  However, tropical linear spaces do have a canonical generating set, the valuated cocircuit vectors, and in the Boolean case we show that for a tropical representation there is an induced permutation representation on the set of cocircuits so one can take the trace of the matrices coming from this latter representation.  We don't have a notion of irreducibility in the tropical setting, but we do know that classically every irreducible representation appears in the regular representation, so we suggest that the tropical character table of a finite group $G$ should be the table of character values for all tropical representations in $\B[G]$, and this is what we compute for the aforementioned three small groups.

\vspace{0.2in}
\emph{Acknowledgements}.  N.G. is supported by NSF DMS-1802263 and enthusiastically thanks his students Mads Shoraka and John Fan for their help exploring Lemma \ref{lem:MO}.

\section{Idempotent linear algebra}

Let $\semifield$ denote an idempotent semifield---that is, $s+s=s$ for all $s\in\semifield$, and all the axioms of a field are satisfied except for the existence of additive inverses; as usual, we denote the additive and multiplicative neutral elements by $0$ and $1$, respectively.  Let \[\B := \{0,1\} \subseteq \semifield\] denote the 2-element Boolean subsemifield; this is the initial object in the category of idempotent semirings.

There is a natural partial order on any $\semifield$-module (and hence on $\semifield$ itself), namely \[a \le b \Leftrightarrow a+b=b.\]  Many notions in tropical geometry readily extend from the max-plus algebra \[\T := \R\cup\{-\infty\}\] to an arbitrary \emph{totally} ordered idempotent semifield; however, one must be especially careful when passing to an arbitrary partially ordered idempotent semifield.  A helpful reference here is Frenk's thesis \cite{Frenk}.  For instance, he shows \cite[Proposition 3.1.1 and Corollary 3.1.3]{Frenk} that $\semifield^\times = \semifield\setminus\{0\}$ is a lattice-ordered abelian group and every lattice-ordered abelian group is of this form.

\subsection{Tropical linear spaces}

Here we briefly recall the algebraic (idempotent module-theoretic) perspective of tropical linear spaces.  Helpful background references include \cite{Speyer-tropical-linear,Speyer-Sturmfels-tropical-grassmannian,Fink-Rincon,Frenk}; in this paper we primarily use the notation and results in \cite{GG3}, which works over an arbitrary idempotent semifield $\semifield$ and introduces the exterior algebra formalism, and \cite{Crowley-Giansiracusa-Mundinger}, which focuses on the Booleans $\B$ and elaborates the tight interplay with matroid theory that arises in that case.

First, recall that quotients in the idempotent setting are given by congruences (see, e.g., \cite[\S2.4]{GG1}) rather than submodules or ideals, since an identification $f\sim g$ cannot usually be achieved by one of the form $f-g\sim 0$ due to the lack of additive inverses.  Given a free module $V\cong \semifield^n$ with basis $e_1,\ldots,e_n$, the tropical exterior algebra $\ext V = \bigoplus \ext^d V$ is the quotient of the symmetric algebra by the congruence generated by $e_i^2 \sim 0$ for all $1 \le i \le n$.  It is \emph{not} true here, as it is classically over a field, that the wedge square of any element in an exterior power is zero.  We write $e_I := e_{i_1}\wedge \cdots \wedge e_{i_d}$ for $I=\{i_1,\ldots,i_d\}$; then the $e_I$ for $I\in\binom{[n]}{d}$ form a basis for the free module $\ext^d V$.  We denote the dual module by \[V^\vee := \Hom(V,\semifield)\] and the dual basis to the $e_i$ by $x_i$; then $\ext^d (V^\vee) \cong (\ext^d V)^\vee$ and the dual basis to the $e_I$ is given by $x_I := x_{i_1}\wedge \cdots \wedge x_{i_d}$.

A rank $d$ \emph{tropical Pl\"ucker vector} is a nonzero multivector $v = \sum v_Ie_I \in \ext^d V$ satisfying the \emph{tropical Pl\"ucker relations}:
\[\sum_{i\in A\setminus B}v_{A\setminus i} v_{B\cup i} = \sum_{i\in (A\setminus B)\setminus j}v_{A\setminus i} v_{B\cup i}\] for all $A\in \binom{[n]}{d+1}$, $B\in \binom{[n]}{d-1}$, and $j\in A\setminus B$. (This is the ``bend relations'' formalism introduced in \cite{GG1} that extends the usual  ``max attained at least twice'' condition beyond the totally ordered case, cf. \cite[\S4.1]{GG3}.)  

To each tropical Pl\"ucker vector $v$ there is an associated submodule $T_v \subseteq V$, called a \emph{tropical linear space}, which is the tropical kernel (see \cite[\S2.4]{GG3} for the definition of tropical kernel, we shall not need it in this paper) of the wedge multiplication map $-\wedge v : V \rightarrow \ext^{d+1} V$; concretely, $T_v$ is the intersection of the tropical hyperplanes defined by the \emph{valuated circuits} \[\sum_{i\in A} v_{A\setminus i}x_i \in V^\vee\] for all $A\in\binom{[n]}{d+1}$, and when $\semifield$ is totally ordered this can be equivalently described as the submodule spanned by the \emph{valuated cocircuits} \[\sum_{i\in [n]\setminus B}v_{B\cup i}e_i \in V\] for all $B\in\binom{[n]}{d-1}$.  See \cite[\S4.4]{GG3} for more details.  Tropical linear spaces only depend on the projective equivalence class of tropical Pl\"ucker vectors, so we tend to view the latter in the projective space $\PP(\ext^d V)$.  The tropical Pl\"ucker vector $v\in \PP(\ext^d V)$ can be recovered from the tropical linear space $T_v \subseteq V$ by taking an appropriate top wedge power \cite[Proposition 6.2.1]{GG3}.

When $\semifield=\B$, a tropical Pl\"ucker vector is precisely the basis-indicator vector of a matroid (the tropical Pl\"ucker relations reformulate the strong basis exchange axiom) and the associated tropical linear space is the module spanned by the indicator vectors of the cocircuits.

 \subsection{Inner products}
 
When later discussing a tropical analogue of Maschke's theorem we will need to average an inner product over a finite group, in very close analogy with the classical story.  While doing so we will rely on the following general result.  First, some terminology: for $V\cong \semifield^n$, we call a symmetric bilinear form \[\langle -,-\rangle : V\times V \rightarrow \semifield\] \emph{non-degenerate} if the induced linear map $V \rightarrow V^\vee$, $v \mapsto \langle v, -\rangle$, is an isomorphism.
 
\begin{proposition}\label{prop:nondegen}
If $\langle -,-\rangle$ is any symmetric bilinear form on  $V$ such that the induced map $V \rightarrow V^\vee$ is injective, then this form is non-degenerate.
\end{proposition}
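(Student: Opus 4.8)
The plan is to show that the matrix representing $\phi\co v \mapsto \langle v,-\rangle$ is a \emph{monomial} matrix; since the paper has already recorded that the linear automorphisms of a free module are exactly the monomial matrices (and a monomial matrix is visibly invertible, its inverse obtained by permuting and inverting the nonzero entries), this upgrades injectivity to the desired isomorphism, which is non-degeneracy by definition. Let $v_i \defeq \phi(e_i) \in V^\vee \cong \semifield^n$ and write $(v_i)_j = \langle e_i, e_j\rangle$ for its coordinates, so that injectivity of $\phi$ means injectivity of the linear map $\semifield^n \to \semifield^n$ sending $e_i \mapsto v_i$. Symmetry of the form will play no essential role: the argument applies to any injective linear self-map of a free module of finite rank.

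First I would record that injectivity forces the columns to be \emph{independent} in the idempotent sense: no $v_m$ satisfies $v_m \le \sum_{i\neq m} c_i v_i$ with $c_i \in \semifield$. Indeed, such an inequality would give, using that $+$ is the join for the natural order (so $v_m$ is absorbed into the larger sum),
\[
\phi\Bigl(e_m + \sum_{i\neq m}c_i e_i\Bigr)=v_m+\sum_{i\neq m}c_i v_i=\sum_{i\neq m}c_i v_i=\phi\Bigl(\sum_{i\neq m}c_i e_i\Bigr),
\]
while the two arguments differ in their $m$-th coordinate ($1$ versus $0$), contradicting injectivity.

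The crux—and the step I expect to be the main obstacle—is to convert this independence into combinatorial structure: each column $v_m$ has a \emph{private coordinate}, i.e.\ an index $j$ with $(v_m)_j \neq 0$ but $(v_i)_j = 0$ for every $i\neq m$. Arguing contrapositively, suppose $v_m$ has no private coordinate, so for each $j$ with $(v_m)_j\neq 0$ one may choose $i(j)\neq m$ with $(v_{i(j)})_j\neq 0$. Then I would set $c_i \defeq \sum_{j \co i(j)=i}(v_m)_j\,(v_i)_j^{-1}$ for each $i\neq m$ (legitimate since nonzero elements of a semifield are invertible, and the finite sum, being a join, dominates each of its terms). A short check gives $v_m \le \sum_{i\neq m}c_i v_i$ coordinatewise: on a row $j$ with $(v_m)_j\neq 0$ the single term $i=i(j)$ already satisfies $c_{i(j)}(v_{i(j)})_j \ge (v_m)_j$, and rows outside the support of $v_m$ are vacuous. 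This contradicts the independence from the previous step. The delicate point is arranging one scalar $c_i$ to handle \emph{all} rows assigned to the same index $i$ at once, which is exactly why summing over $\{j \co i(j)=i\}$ is the correct choice.

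Finally I would assemble the monomial structure from these private coordinates. A private coordinate for $v_m$ is a row $j(m)$ whose only nonzero entry sits in column $m$; the assignment $m\mapsto j(m)$ is injective, since a repeated value would force a single row to have two distinct private columns, and hence bijective because the index set is finite. Thus every row of the matrix has exactly one nonzero entry and these lie in distinct columns, i.e.\ the matrix is monomial, and $\phi$ is an isomorphism. I would flag that finiteness of the rank is essential—it is used precisely in passing from injectivity to surjectivity of $m\mapsto j(m)$—reflecting that the statement is a genuinely finite-dimensional phenomenon.
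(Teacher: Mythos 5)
Your proof is correct, and it takes a genuinely different route from the paper's. The paper's argument is short but leans on two things you never use: it invokes a duality lemma from Frenk's thesis (dualizing the injection $\varphi\colon V \to V^\vee$ yields a surjection $V^{\vee\vee} \to V^\vee$, together with reflexivity $V \cong V^{\vee\vee}$) to produce, for each $f \in V^\vee$, a vector $v$ with $f = \langle -, v\rangle$, and it then uses \emph{symmetry} of the form in an essential way to rewrite this as $f = \langle v,-\rangle = \varphi(v)$. Your argument instead establishes the stronger, purely linear-algebraic fact that every injective linear endomorphism of $\semifield^n$ is represented by a monomial matrix, hence is bijective; the chain ``injectivity $\Rightarrow$ no column is dominated by a combination of the others $\Rightarrow$ every column has a private row $\Rightarrow$ (by finiteness of $n$) the matrix is monomial'' is complete, and each step uses only facts valid over an arbitrary idempotent semifield ($a \le a+b$, multiplication preserves the order, nonzero scalars are invertible, the order on $\semifield^n$ is coordinatewise); in particular you never need a total order, matching the generality of the paper. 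What your route buys: it is self-contained (no external citation), it removes symmetry from the hypotheses (so it applies to any bilinear form, indeed to any injective linear self-map of $V$), and it extracts extra structure---the Gram matrix $(\langle e_i,e_j\rangle)_{i,j}$ of any such form must be monomial, i.e.\ the form pairs basis vectors by a scaled permutation, which under symmetry is an involution. What the paper's route buys is brevity and the avoidance of the combinatorial bookkeeping. One small caveat: your remark that symmetry ``plays no essential role'' is accurate for the conclusion that $\varphi$ is an isomorphism, but the paper defines non-degeneracy only for symmetric forms, so the hypothesis is still needed for the statement itself to parse; also, your observation that finiteness of the rank is essential is well placed, since an infinite-rank shift map is injective, has private rows, and is not surjective.
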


\begin{proof}
Let us denote the induced map $V \rightarrow V^\vee$ by $\varphi$.  Then we are assuming $\varphi$ is injective and we must show that it is surjective; so let $f\in V^\vee$.  By \cite[Lemma 3.2.1(ii) and subsequent Remark]{Frenk}, the dual map \[V \cong V^{\vee\vee} \xrightarrow{\varphi^\vee} V^\vee\] is surjective.  This means there exists $v\in V$ such that $\varphi^\vee(ev_v)  = f$, where $ev_v : V^\vee \rightarrow \semifield$ is the linear form given by evaluating linear forms at $v$.   Unpacking this, we have that $f = ev_v\circ \varphi$ is the linear form $w \mapsto \langle w, v \rangle$.  By symmetry this equals the linear form $w \mapsto \langle v, w\rangle$, namely $\varphi(v)$, so $f = \varphi(v)$.
\end{proof}

\section{Tropical representations}

Throughout this section we let $G$ denote an arbitrary group and we denote by $V \cong \semifield^n$ a free module of rank $n \in \N$ over the idempotent semifield $\semifield$. By \cite[Proposition 2.2.2]{GG3}, a basis for $V$ is unique up to permutation and rescaling; put another way, the group of linear automorphisms is \[\GL(V) \cong \Sigma_n \ltimes (\semifield^\times)^n\] and relative to a fixed basis it consists of \emph{monomial matrices} (i.e., permutation matrices where the nonzero entries are allowed to be any nonzero value, not just 1).

\begin{definition}
A \emph{linear representation} of $G$ over $\semifield$ is a group homomorphism $G \rightarrow \GL(V)$.  A \emph{tropical subrepresentation} is a $G$-invariant tropical linear space $T \subseteq V$.
\end{definition}

We shall use the term \emph{tropical $G$-representation}, or simply \emph{representation} if the context is clear, to refer to either a linear representation of $G$ over $\semifield$ or a tropical subrepresentation (the former is a special case of the latter since $V$ is a tropical linear subspace of itself); in either case, by the \emph{dimension} of the representation we shall mean the rank of the tropical linear space, which is also the rank of the corresponding matroid.

Given a linear representation $G \rightarrow \GL(V)$, there is an induced linear representation on any tensor, symmetric, or exterior power of $V$.  (The tensor and symmetric powers of a module over an idempotent semifield are defined as usual, whereas the exterior algebra is more subtle and is studied in \cite{GG3}.)  The induced representation $G \rightarrow \GL(\ext^d V)$, for $1 < d < n$, is particularly important for us:

\begin{proposition}\label{prop:DressAct}
For a linear representation $V$ of $G$, if $v\in \ext^d V$ satisfies the tropical Pl\"ucker relations then so does $g\cdot v$ for each $g\in G$.  Thus the linear $G$-action on $\ext^d V$ restricts to an action on the Dressian $Dr(d,n) \subseteq \PP(\ext^d V)$.
\end{proposition}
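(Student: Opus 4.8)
The plan is to exploit the fact that every element of $G$ acts on $V$ by a monomial matrix, so the induced action on $\ext^d V$ merely permutes and rescales the standard basis $\{e_I\}$; the whole argument then reduces to checking that this rescaling factors out of each tropical Pl\"ucker relation uniformly. First I would fix $g\in G$ and write its action as $g\cdot e_i = \lambda_i e_{\sigma(i)}$ for a permutation $\sigma\in\Sigma_n$ and nonzero scalars $\lambda_i\in\semifield^\times$. Because the tropical exterior algebra is a quotient of the \emph{commutative} symmetric algebra there are no signs to track, and for $I=\{i_1,\ldots,i_d\}$ one simply obtains $g\cdot e_I = \lambda_I e_{\sigma(I)}$, where $\lambda_I := \prod_{i\in I}\lambda_i$ and $\sigma(I):=\{\sigma(i):i\in I\}$. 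Writing $w := g\cdot v = \sum_J w_J e_J$, this yields $w_J = \lambda_{\sigma^{-1}(J)}\, v_{\sigma^{-1}(J)}$.

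Next I would substitute this into the tropical Pl\"ucker relation for $w$ indexed by $(A,B,j)$. Setting $A':=\sigma^{-1}(A)$, $B':=\sigma^{-1}(B)$, $j':=\sigma^{-1}(j)$ and reindexing the sums by $i':=\sigma^{-1}(i)$, each product becomes $w_{A\setminus i}\,w_{B\cup i} = \lambda_{A'\setminus i'}\,\lambda_{B'\cup i'}\; v_{A'\setminus i'}\,v_{B'\cup i'}$. The crucial computation, and the one place requiring care, is that the scalar prefactor does \emph{not} depend on $i$: since $i\notin B$ forces $i'\notin B'$ while $i'\in A'$, and since $\semifield^\times$ is a group, the factor $\lambda_{i'}$ cancels, giving $\lambda_{A'\setminus i'}\,\lambda_{B'\cup i'} = (\lambda_{A'}\lambda_{i'}^{-1})(\lambda_{B'}\lambda_{i'}) = \lambda_{A'}\lambda_{B'}$.

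Finally, because $\lambda_{A'}\lambda_{B'}$ is a common nonzero factor it pulls out of both sides of the relation, and what remains on each side is exactly the corresponding side of the tropical Pl\"ucker relation for $v$ at the triple $(A',B',j')$, the map $i\mapsto i'$ being a bijection $A\setminus B\to A'\setminus B'$ carrying $j$ to $j'$. Since $v$ satisfies that relation by hypothesis, multiplying through by $\lambda_{A'}\lambda_{B'}$ shows $w$ satisfies the relation at $(A,B,j)$; as this triple was arbitrary, $g\cdot v$ is again a tropical Pl\"ucker vector. As $g$ is invertible we have $g\cdot v\neq 0$, so the linear action descends to a well-defined action on $Dr(d,n)\subseteq\PP(\ext^d V)$, which gives the second assertion. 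I expect the scalar-independence identity to be the only genuine content of the argument; the absence of signs in the tropical exterior algebra is precisely what keeps the bookkeeping clean and lets the factor $\lambda_{A'}\lambda_{B'}$ emerge intact.
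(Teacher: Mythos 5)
Your proof is correct and follows essentially the same route as the paper's: both write $g$ as a permutation $\sigma$ composed with scaling by $\lambda_i$, pull the relation for $g\cdot v$ at $(A,B,j)$ back along $\sigma^{-1}$, and rest on the key identity $\lambda_{A'\setminus i'}\lambda_{B'\cup i'}=\lambda_{A'}\lambda_{B'}$, which lets the scalar factor out uniformly so the hypothesis on $v$ applies. Your explicit remarks that $i\mapsto i'$ bijects $A\setminus B$ onto $A'\setminus B'$ and that $g\cdot v\neq 0$ are details the paper leaves implicit, but they do not change the argument.
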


\begin{proof}
Suppose $v = \sum v_Ie_I\in\ext^d V$ satisfies the tropical Pl\"ucker relations, so that \[\sum_{i\in A\setminus B}v_{A\setminus i} v_{B\cup i} = \sum_{i\in (A\setminus B)\setminus j}v_{A\setminus i} v_{B\cup i}\] for all $A\in \binom{[n]}{d+1}$, $B\in \binom{[n]}{d-1}$, and $j\in A\setminus B$, and that $g \in G$ is represented by a matrix in $\GL(V) \cong \Sigma_n\ltimes (\semifield^\times)^n$ that is the product of a permutation $\sigma$ and a diagonal matrix with diagonal entries $\lambda_1,\ldots,\lambda_n$,  Then \[g\cdot v = \sum \lambda_I v_Ie_{\sigma(I)} = \sum \lambda_{\sigma^{-1}(I)}v_{\sigma^{-1}(I)}e_I,\] where $\lambda_J := \prod_{i\in J}\lambda_i$.  Thus the tropical Pl\"ucker relations hold for $g\cdot v$ if for any $C\in\binom{[n]}{d+1}$ and $D\in\binom{[n]}{d-1}$, any term in the following sum can be omitted without changing the value of the sum: \[\sum_{i\in C\setminus D}\lambda_{\sigma^{-1}(C\setminus i)}\lambda_{\sigma^{-1}(D\cup i)}v_{\sigma^{-1}(C\setminus i)} v_{\sigma^{-1}(D\cup i)}.\]  Let $A := \sigma^{-1}(C)$ and $B := \sigma^{-1}(D)$.  Since summing over \[i\in C\setminus D = \sigma(A)\setminus\sigma(B)\] is equivalent to summing over \[j=\sigma^{-1}(i) \in A\setminus B,\] the preceding sum is equal to
\[\sum_{j\in A\setminus B}\lambda_{A\setminus j}\lambda_{B\cup j}v_{A\setminus j}v_{B\cup j} = \lambda_A\lambda_B \sum_{j\in A\setminus B}v_{A\setminus j}v_{B\cup j},\] so we can indeed omit any individual term by the hypothesis that $v$ satisfies the tropical Pl\"ucker relations.
\end{proof}

The $G$-action on $V$ and its tropical linear submodules is compatible with the $G$-action on tropical Pl\"ucker vectors:

\begin{proposition}
Fix a linear representation $V$ of $G$.  If $T_v \subseteq V$ is a tropical linear space with tropical Pl\"ucker vector $v \in \ext^d V$, then the tropical linear space associated to the tropical Pl\"ucker vector $g\cdot v$ is $g\cdot T_v \subseteq V$:
\[T_{g\cdot v} = g\cdot T_v.\]
\end{proposition}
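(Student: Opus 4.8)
The plan is to work with the concrete description of a tropical linear space as an intersection of tropical hyperplanes, recalled in \S2.1, rather than with the abstract tropical kernel of wedge multiplication (which the paper otherwise prefers to avoid). Recall that $T_v = \bigcap_{A}\bend(c_A)$, where for each $A\in\binom{[n]}{d+1}$ the valuated circuit is $c_A := \sum_{i\in A}v_{A\setminus i}x_i\in V^\vee$ and $\bend(c_A)\subseteq V$ is the tropical hyperplane (bend locus) it cuts out. The strategy has three steps: (i) identify the valuated circuits of $g\cdot v$ with the $g$-translates of those of $v$, up to projective scaling; (ii) show that $g$ carries bend loci to bend loci, i.e.\ $\bend(g\cdot\ell)=g\cdot\bend(\ell)$ for any $\ell\in V^\vee$; and (iii) reassemble the intersection.

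For step (i), write $g=\sigma\cdot\diag(\lambda_1,\dots,\lambda_n)$ as in Proposition~\ref{prop:DressAct}, so that $g\cdot e_i=\lambda_i e_{\sigma(i)}$ and, by the computation there, $(g\cdot v)_I=\lambda_{\sigma^{-1}(I)}v_{\sigma^{-1}(I)}$; the contragredient action on $V^\vee$ is $g\cdot x_j=\lambda_j^{-1}x_{\sigma(j)}$. Fix $C\in\binom{[n]}{d+1}$, put $A:=\sigma^{-1}(C)$, and let $c'_C:=\sum_{i\in C}(g\cdot v)_{C\setminus i}\,x_i$ be the valuated circuit of $g\cdot v$ at $C$. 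I would expand both $c'_C$ and $g\cdot c_A=\sum_{k\in A}v_{A\setminus k}\lambda_k^{-1}x_{\sigma(k)}$, reindexing the latter by $i=\sigma(k)$. Both become sums over $i\in C$ with the same form $x_i$ and the same $v$-coefficient $v_{\sigma^{-1}(C\setminus i)}$, while the $\lambda$-factors differ by $\lambda_{\sigma^{-1}(C\setminus i)}\cdot\lambda_{\sigma^{-1}(i)}=\lambda_{\sigma^{-1}(C)}$, which is \emph{independent of $i$} because $\sigma^{-1}(C\setminus i)$ and $\{\sigma^{-1}(i)\}$ partition $\sigma^{-1}(C)$. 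Hence $c'_C=\lambda_{\sigma^{-1}(C)}\,(g\cdot c_A)$, a single nonzero rescaling, so $c'_C$ and $g\cdot c_A$ define the same tropical hyperplane.

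Step (ii) is the observation that $g$ acts on $V$ by an $\semifield$-linear automorphism whose contragredient sends $\ell$ to $g\cdot\ell$, and that $(g\cdot\ell)(g\cdot w)=\ell(w)$ termwise: the multiset of terms $\{\ell_i w_i\}_i$ is reproduced under the relabeling $j=\sigma(i)$. Since the bend relations depend only on this multiset, $w\in\bend(\ell)\iff g\cdot w\in\bend(g\cdot\ell)$, i.e.\ $\bend(g\cdot\ell)=g\cdot\bend(\ell)$. Combining the three steps, and using that $C\mapsto\sigma^{-1}(C)$ is a bijection of $\binom{[n]}{d+1}$ together with the fact that the bijection $g$ commutes with intersections, I obtain
\[
T_{g\cdot v}=\bigcap_{C}\bend(c'_C)=\bigcap_{C}g\cdot\bend(c_{\sigma^{-1}(C)})=g\cdot\bigcap_{A}\bend(c_A)=g\cdot T_v.
\]

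The only real obstacle is the scalar bookkeeping in step (i): everything hinges on the discrepancy between the two linear forms being a \emph{single} nonzero scalar rather than an index-dependent rescaling, for only then does projective invariance of tropical linear spaces allow one to pass to equal bend loci. This is exactly the monomial cancellation $\lambda_{\sigma^{-1}(C\setminus i)}\lambda_{\sigma^{-1}(i)}=\lambda_{\sigma^{-1}(C)}$ that drove the proof of Proposition~\ref{prop:DressAct}. I note that one could instead argue abstractly: since the induced $G$-action on $\ext V$ is by algebra automorphisms, $g\cdot(w\wedge v)=(g\cdot w)\wedge(g\cdot v)$, so the wedge-multiplication maps $\mu_v=({-}\wedge v)$ satisfy $\mu_{g\cdot v}\circ g=g\circ\mu_v$, and functoriality of the tropical kernel under the isomorphism $g$ gives $T_{g\cdot v}=g\cdot T_v$ at once; but this route requires the tropical-kernel machinery we are otherwise avoiding, whereas the circuit argument above works over an arbitrary idempotent $\semifield$.
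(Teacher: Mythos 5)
Your proof is correct, and it reaches the conclusion by a genuinely different route than the paper's. The paper proves only the single containment $g\cdot T_v \subseteq T_{g\cdot v}$: it takes $p\in T_v$, plugs $g\cdot p$ into each valuated circuit of $g\cdot v$, and factors out the unit $\lambda_{\sigma^{-1}(A)}$ --- exactly the monomial cancellation you isolate --- so that the bend relations for $v$ at $p$ finish the computation; equality is then obtained from an external fact, namely that a containment of tropical linear spaces of the same rank must be an equality (quoting results of Crowley--Giansiracusa--Mundinger and Oxley). You work instead at the level of the defining linear forms rather than points: each valuated circuit $c'_C$ of $g\cdot v$ is a single unit multiple of the contragredient image $g\cdot c_{\sigma^{-1}(C)}$, bend loci are equivariant in the sense $\bend(g\cdot\ell)=g\cdot\bend(\ell)$, and reindexing the intersection over $\binom{[n]}{d+1}$ then gives both inclusions at once. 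What each approach buys: the paper's version is shorter modulo the quoted rank-containment fact and reuses the computation of Proposition \ref{prop:DressAct} nearly verbatim, while yours is self-contained (no appeal to matroid quotient theory), makes the mechanism transparent --- the whole point being that the scalar discrepancy $\lambda_{\sigma^{-1}(C\setminus i)}\lambda_{\sigma^{-1}(i)}=\lambda_{\sigma^{-1}(C)}$ is independent of $i$ --- and, like the paper's, is valid over an arbitrary idempotent $\semifield$ since the circuit-intersection description of $T_v$ requires no total order. Your closing remark that one could instead argue abstractly through the tropical kernel and the identity $g\cdot(w\wedge v)=(g\cdot w)\wedge(g\cdot v)$ is also sound; it would be the fastest route, but it relies on precisely the machinery that both written proofs avoid.
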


\begin{proof}
The submodule $g\cdot T_v \subset V$ is a tropical linear space of rank $d$.  If we show that $g\cdot T_v$ is contained in the tropical hyperplane defined by each valuated circuit vector of the rank $d$ tropical Pl\"ucker vector $g\cdot v$, then we will have that $g\cdot T_v \subseteq  T_{g\cdot v}$; we would then be done since a containment of tropical linear spaces of the same rank must be an equality (by, e.g., \cite[Corollary 5.3]{Crowley-Giansiracusa-Mundinger} together with \cite[Corollary 7.3.4]{Oxley}).  So we will establish this assertion about valuated circuit vectors.

Let $p = \sum p_i e_i\in T_v$ and, as in the proof of Proposition \ref{prop:DressAct}, let $g\in G$ be represented by the product of a permutation $\sigma$ and a diagonal matrix with entries $\lambda_1,\ldots,\lambda_n$---and for any $J \subseteq [n]$ write $\lambda_J := \prod_{i\in J}\lambda_i$.  Then \[g\cdot p = \sum \lambda_i p_i e_{\sigma(i)} = \sum \lambda_{\sigma^{-1}(i)}p_{\sigma^{-1}(i)}e_i.\]  On the other hand, as we saw in the proof of Proposition \ref{prop:DressAct}, for $v=\sum v_Ie_I$ we have \[g\cdot v = \sum \lambda_{\sigma^{-1}(I)}v_{\sigma^{-1}(I)}e_I.\]  Thus for each $A\in\binom{[n]}{d+1}$ we have the following valuated circuit vector of $g\cdot v$:
\[\sum_{i\in A}\lambda_{\sigma^{-1}(A\setminus i)}v_{\sigma^{-1}(A\setminus i)}x_i.\]
Plugging $g\cdot p$ into this valuated circuit yields
\[\lambda_{\sigma^{-1}(A)}\sum_{i\in A} v_{\sigma^{-1}(A\setminus i)}p_{\sigma^{-1}(i)}.\]
Setting $C := \sigma^{-1}(A)$ and summing over $j=\sigma^{-1}(i) \in \sigma^{-1}(A) = C$ shows that this equals $\lambda_C$ times the valuated circuit vector of $v$ associated to the set $C$ evaluated at $p$, which means, since $p\in T_v$, we can delete any individual term without changing the value of the sum, which is what we needed to show.
\end{proof}

\begin{corollary}\label{cor:DressAct}
Dimension $d$ tropical subrepresentations of a linear representation are the same as fixed points of the $G$-action on the Dressian $Dr(d,n)$.
\end{corollary}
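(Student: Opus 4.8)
The plan is to deduce the corollary formally from the two preceding propositions, treating it as a bookkeeping statement about how an already-established $G$-equivariant bijection interacts with fixed points. The only content beyond that is being scrupulous about projective equivalence, since ``fixed point of the Dressian'' is a condition in $\PP(\ext^d V)$ (i.e., up to rescaling) rather than in $\ext^d V$ itself.

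First I would recall the correspondence $v \mapsto T_v$ between tropical Pl\"ucker vectors and tropical linear spaces. By the earlier discussion, every rank $d$ tropical linear space in $V$ arises as $T_v$ for some tropical Pl\"ucker vector $v$, and by \cite[Proposition 6.2.1]{GG3} the vector $v$ can be recovered from $T_v$ up to its projective class. Thus $v \mapsto T_v$ descends to a \emph{bijection} between points of $Dr(d,n)\subseteq\PP(\ext^d V)$ and rank $d$ tropical linear spaces $T\subseteq V$. The passage to projective classes is the key point: it is exactly what makes the recovery of $v$ well-defined, and it is what will match ``fixed up to scaling'' on the Dressian side with honest invariance on the tropical-linear-space side.

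Next I would invoke the immediately preceding proposition, which gives $T_{g\cdot v}=g\cdot T_v$ for all $g\in G$. Combined with Proposition \ref{prop:DressAct}, which guarantees that the $G$-action on $\ext^d V$ preserves the Dressian (so that $G$ genuinely acts on $Dr(d,n)$), this says precisely that the bijection $v\mapsto T_v$ is $G$-equivariant, where $G$ acts on tropical linear spaces through its action on $V$. A $G$-equivariant bijection restricts to a bijection between fixed-point sets, so it remains only to unwind what each fixed-point condition means. A tropical linear space $T_v$ is a tropical subrepresentation iff $g\cdot T_v=T_v$ for every $g$, i.e.\ $T_{g\cdot v}=T_v$, which by injectivity of the correspondence on projective classes is equivalent to $g\cdot v=v$ in $\PP(\ext^d V)$ for every $g$, i.e.\ to $v$ being a $G$-fixed point of $Dr(d,n)$.

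There is no real obstacle here: the two propositions do all the work, and the corollary is their combination. The one subtlety I would take care to state correctly is the role of projectivization---that $g\cdot v=v$ is asked only up to a nonzero scalar, and that this is harmless precisely because tropical linear spaces depend only on the projective equivalence class of their Pl\"ucker vector, so the bijection is between \emph{projective} classes and tropical linear spaces. Making that explicit is what turns the equivariant bijection into the asserted identification of fixed points with tropical subrepresentations.
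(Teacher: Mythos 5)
Your proof is correct and takes the same route as the paper: the paper states this corollary without any proof, as an immediate consequence of Proposition \ref{prop:DressAct} (the $G$-action preserves the Dressian) and the preceding proposition ($T_{g\cdot v}=g\cdot T_v$), which is precisely the deduction you spell out. Your attention to projective equivalence---that ``fixed point'' in $Dr(d,n)\subseteq\PP(\ext^d V)$ means fixed up to a nonzero scalar, which is harmless because $T_v$ depends only on the projective class of $v$ and $v$ is recoverable from $T_v$ up to that class---is exactly the bookkeeping the paper leaves implicit.
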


\subsection{Matroidal interpretation}\label{sec:matroidinter}

When $\semifield = \B$, a tropical linear space $T\subseteq V$ is equivalent to a matroid $M(T)$ on the ground set $[n] := \{1,\ldots,n\}$ and we have:

\begin{proposition}\label{prop:matroid}
A tropical $G$-representation $T\subseteq V$ is equivalent to a group homomorphism $G \rightarrow \Aut(M(T))$ to the matroid automorphism group.
\end{proposition}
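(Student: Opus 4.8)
The plan is to deduce this as the Boolean specialization of Corollary \ref{cor:DressAct}, which already identifies dimension $d$ tropical subrepresentations with $G$-fixed points of the induced action on the Dressian $Dr(d,n) \subseteq \PP(\ext^d V)$, where $d = \rank M(T)$. All the work is then in unwinding what each of these two objects becomes when $\semifield = \B$.

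First I would record the simplifications that occur over $\B$. Since $\B^\times = \{1\}$, the scaling part of $\GL(V) \cong \Sigma_n \ltimes (\semifield^\times)^n$ is trivial, so a linear representation is simply a homomorphism $\rho \co G \to \Sigma_n$ and each $\rho(g)$ acts on $V$ by the coordinate permutation $\sigma := \rho(g)$ with $\sigma\cdot e_i = e_{\sigma(i)}$. For the same reason there is no projective rescaling, so a point of $\PP(\ext^d V)$ is just a nonzero multivector $v = \sum v_I e_I$, and---as recalled above---the points of $Dr(d,n)$ are precisely the basis-indicator vectors of rank $d$ matroids on $[n]$, the tropical Pl\"ucker relations encoding strong basis exchange. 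Writing $M = M(T_v)$ for the matroid whose bases are $\{I : v_I = 1\}$, this sets up the bijection $v \leftrightarrow M$ underlying the identification $T \leftrightarrow M(T)$.

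Next I would compute the action on indicator vectors. As in the proofs above, $(\sigma\cdot v)_I = v_{\sigma^{-1}(I)}$, so $\sigma\cdot v$ is the indicator vector of the matroid $\sigma\cdot M$ whose bases are $\{\sigma(B) : B \text{ a basis of } M\}$. Hence $\sigma\cdot v = v$ if and only if $\sigma$ carries bases of $M$ to bases of $M$, which is exactly the condition $\sigma \in \Aut(M)$. Since there is no scaling, a $G$-fixed point of $Dr(d,n)$ is a $v$ with $\rho(g)\cdot v = v$ for every $g$, i.e., a matroid $M$ with $\rho(g) \in \Aut(M)$ for all $g$. By Corollary \ref{cor:DressAct} these fixed points are precisely the tropical subrepresentations $T$, and the condition on $\rho$ says exactly that $g \mapsto \rho(g)$ factors as a homomorphism $G \to \Aut(M(T)) \subseteq \Sigma_n$.

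Finally I would package this as the asserted equivalence in both directions: a tropical subrepresentation $T\subseteq V$ yields the homomorphism $g \mapsto \rho(g) \in \Aut(M(T))$, while any homomorphism $G \to \Aut(M(T))$, composed with the inclusion $\Aut(M(T)) \hookrightarrow \Sigma_n = \GL(V)$, is a linear representation for which $T$ is $G$-invariant. I expect no serious obstacle here; the only point requiring care is the Boolean dictionary in the middle step---that $Dr(d,n)$ over $\B$ is exactly the family of rank $d$ matroids, and that fixing the indicator vector coincides with the basis-preserving definition of $\Aut(M)$---but this is bookkeeping on top of the matroid/tropical-linear-space correspondence already in place rather than new mathematics.
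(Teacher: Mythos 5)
Your argument is correct, but it takes a genuinely different route from the paper's. You deduce the proposition from Corollary \ref{cor:DressAct}: over $\B$ projective rescaling is trivial, the Dressian $Dr(d,n)$ consists exactly of the basis-indicator vectors of rank $d$ matroids, and the computation $(\sigma\cdot v)_I = v_{\sigma^{-1}(I)}$ shows that fixing the indicator vector is the same as permuting the set of bases; so $G$-fixed points of $Dr(d,n)$, i.e.\ tropical subrepresentations, correspond exactly to homomorphisms $G\to\Aut(M(T))$. The paper instead works directly with the linear space $T$: it uses that $T$ is spanned by the cocircuit indicator vectors, proves separately (Lemma \ref{lem:MatAut}) that a permutation is a matroid automorphism if and only if it sends cocircuits to cocircuits, and concludes that $G$-invariance of $T$ is exactly the statement that $G\to\GL(V)\cong\Sigma_n$ factors through $\Aut(M(T))$. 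In effect you replace the cocircuit lemma with the equally elementary fact that a permutation is an automorphism if and only if it preserves bases; note, though, that the paper defines $\Aut(M)$ via independent sets, so your route still owes the one-line observation that basis-preserving and independent-set-preserving permutations coincide (every independent set is a subset of a basis, and images of bases under a basis-preserving permutation are bases). What your approach buys is economy: it reuses the Pl\"ucker-vector machinery already established in Proposition \ref{prop:DressAct} and Corollary \ref{cor:DressAct}, making the Boolean case pure bookkeeping. What the paper's approach buys is a direct description of the $G$-action on $T$ in terms of cocircuits, which it exploits again later (the tropical characters are defined via the permutation action on cocircuits), so Lemma \ref{lem:MatAut} does double duty there.
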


\begin{proof}
Since $\B$ only has two elements we have $\B^\times = \{1\}$, so a linear representation $G \rightarrow \GL(V)$ is equivalent to a $G$-action on $[n]$, namely a homomorphism $G \rightarrow \Sigma_n$.  By definition, an automorphism of a matroid on $[n]$ is a permutation of $[n]$ that sends independent sets to independent sets.  The latter is equivalent to the permutation sending cocircuits to cocircuits, by Lemma \ref{lem:MatAut} below.  Thus $G \rightarrow \GL(V) \cong \Sigma_n$ factors through $\Aut(M(T))$ if and only if $T$ is $G$-invariant.
\end{proof}

\begin{remark}
Thus, over $\B$, we may think of a tropical $G$-representation as a linear $G$-action on a tropical linear space or equivalently as a $G$-action on a matroid.
\end{remark}

\begin{lemma}\label{lem:MatAut}
A permutation is an automorphism of a matroid if and only if it sends cocircuits to cocircuits.
\end{lemma}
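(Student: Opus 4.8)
The plan is to reduce the statement to matroid duality together with the standard fact that automorphisms are exactly the bijections preserving circuits. Recall two cryptomorphic facts about a matroid $M$ on the finite ground set $[n]$: first, a permutation $\sigma$ of $[n]$ lies in $\Aut(M)$ if and only if it carries circuits to circuits (the circuit system determines $M$, so $\sigma$ is an isomorphism $M \to M$ precisely when the image of each circuit is again a circuit; cf.\ \cite{Oxley}); second, the cocircuits of $M$ are by definition the circuits of the dual matroid $M^*$. Since $\sigma$ is a bijection of a finite set, ``$\sigma$ sends cocircuits to cocircuits'' is the same as ``$\sigma$ permutes the family of cocircuits,'' and likewise for circuits, so throughout I may speak interchangeably of sending circuits (resp.\ cocircuits) to circuits (resp.\ cocircuits) and of preserving the family of circuits (resp.\ cocircuits) setwise.

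First I would establish the key auxiliary fact that $\Aut(M) = \Aut(M^*)$. The bases of $M^*$ are exactly the complements $[n]\setminus B$ of the bases $B$ of $M$. An arbitrary permutation $\sigma$ commutes with complementation, i.e.\ $\sigma([n]\setminus B) = [n]\setminus\sigma(B)$, so $\sigma$ permutes the bases of $M$ if and only if it permutes their complements, which are precisely the bases of $M^*$. Since a bijection of $[n]$ is a matroid automorphism if and only if it preserves bases, this yields $\sigma \in \Aut(M) \iff \sigma\in\Aut(M^*)$.

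Combining these facts gives the lemma directly. Applying the circuit characterization to the dual matroid $M^*$, a permutation $\sigma$ lies in $\Aut(M^*)$ if and only if it sends the circuits of $M^*$ to circuits of $M^*$; by the definition of cocircuits this is exactly the condition that $\sigma$ sends the cocircuits of $M$ to cocircuits of $M$. Hence $\sigma$ sends cocircuits to cocircuits if and only if $\sigma\in\Aut(M^*)$, and by the auxiliary identity $\Aut(M^*)=\Aut(M)$ this holds if and only if $\sigma\in\Aut(M)$, i.e.\ if and only if $\sigma$ is an automorphism of $M$.

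The argument is essentially formal once duality is in hand, so I do not anticipate a serious obstacle; the only point requiring a little care is the auxiliary identity $\Aut(M)=\Aut(M^*)$, and specifically the observation that an arbitrary permutation of the ground set commutes with complementation — this is what lets one pass between the base families of $M$ and $M^*$. A secondary point to keep straight is that the hypothesis ``sends cocircuits to cocircuits'' need only be read as preserving the cocircuit family setwise, which is automatic for a bijection of a finite ground set, and this is the form in which the circuit cryptomorphism is applied.
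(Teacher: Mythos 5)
Your proof is correct and follows essentially the same route as the paper's: both arguments reduce the statement to matroid duality, using that cocircuits of $M$ are circuits of $M^*$, that $\Aut(M)=\Aut(M^*)$ because duality acts by complementation on bases, and that automorphisms are exactly the permutations preserving circuits. The only cosmetic differences are that you invoke the circuit characterization of automorphisms as a standard cryptomorphism (the paper derives it from the independent-set definition) and that your symmetric ``iff'' formulation handles both directions at once, where the paper proves one direction and declares the converse similar.
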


\begin{proof}
This is elementary, but we include a proof for the convenience of those unfamiliar with matroids.  Suppose $\sigma\in\Sigma_n$ is an automorphism of a matroid $M$, i.e., it sends independent sets to independent sets.  Then it must send dependent sets (that is, sets which are not independent) to dependent sets, otherwise $\sigma^{-1}$, which is also an automorphism, would send an independent set to a dependent set.  Moreover, $\sigma$ must send circuits (minimal dependent sets) to circuits, for if $C\subseteq [n]$ is a circuit then we know $\sigma(C)$ is dependent and if it properly contained a dependent subset, say $D \subsetneq \sigma(C)$, then $\sigma^{-1}(D)$ would be a proper dependent subset of $C$, contradicting the fact that $C$ is a circuit.  Finally, the circuits of $M$ are precisely the cocircuits of the dual matroid $M^*$, and a permutation is an automorphism of a matroid if and only if it is an automorphism of the dual matroid, since at the level of bases (maximal independent sets) matroid duality is simply set-theoretic complement.  Thus an automorphism of $M$ is an automorphism of $M^*$ and hence sends circuits of $M^*$ to circuits of $M^*$, i.e., cocircuits of $M$ to cocircuits of $M$.  The converse is similar.
\end{proof}

\subsection{Tropicalizing classical representations}

Let $\field$ be a field equipped with a valuation $\nu : \field \rightarrow \semifield$ (see \cite[\S2.5]{GG1} for a discussion of non-archimedean valuations when the idempotent semifield is not necessarily totally ordered).  \emph{Throughout this section we fix a choice of basis for $W\cong \field^n$.}  Tropicalization, with respect to this fixed basis, sends linear subspaces $L\subseteq W$ to tropical linear subspaces $\trop(L)\subseteq \semifield^n \cong V$, where $\trop(L)$ is the $\semifield$-module generated by $\{\nu(w)~|~w\in L\}$.  The tropical Pl\"ucker of $\trop(L)$ is the coefficient-wise valuation of the Pl\"ucker vector of $L$ \cite{Speyer-tropical-linear}.

Let us call a $\field$-linear representation $G \rightarrow \GL(W)$ a \emph{monomial representation} if the matrix for each $g\in G$, with respect to the \emph{fixed} basis of $W$, is a monomial matrix.  

\begin{remark}
One might naturally wonder how restrictive it is to require a representation to be monomial for some choice of basis.  A finite group for which every irreducible representation is monomial in some basis is called an M-group.  Every M-group is solvable \cite[Corollary 2.3.4]{Bray}, and conversely every supersolvable group is an M-group \cite[Corollary 2.3.5]{Bray}.
\end{remark}

\begin{lemma}
Applying the valuation $\nu$ matrix entry-wise yields a homomorphism from the group $\Sigma_n\ltimes (\field^\times)^n \subseteq \GL(W)$ of $n\times n$ monomial matrices over $\field$ to $\GL(V)$.
\end{lemma}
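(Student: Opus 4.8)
The plan is to verify, in order, three things: that entry-wise valuation is well-defined as a map into $\GL(V)$; that it respects matrix multiplication; and that it preserves the identity matrix. The only facts about $\nu$ I will use are its multiplicativity together with $\nu(0)=0$ and $\nu(1)=1$, plus the crucial structural observation that monomial matrices are so sparse that their products never force $\nu$ to interact with genuine addition.

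First I would record that $\nu$ carries $\field^\times$ into $\semifield^\times$: for $\lambda\in\field^\times$ we have $\nu(\lambda)\nu(\lambda^{-1})=\nu(1)=1$, so $\nu(\lambda)$ is a unit in $\semifield$. Writing a monomial matrix $M\in\Sigma_n\ltimes(\field^\times)^n$ as $M=P_\sigma D$, where $P_\sigma$ is the permutation matrix of $\sigma$ and $D=\diag(\lambda_1,\dots,\lambda_n)$ with $\lambda_i\in\field^\times$, its entries are either $0$ or a single $\lambda_i$ in each row and each column. Applying $\nu$ entry-wise therefore produces a matrix whose entries are $\nu(0)=0$ off the support of $M$ and $\nu(\lambda_i)\in\semifield^\times$ on it; this is again a monomial matrix, now over $\semifield$, hence an element of $\Sigma_n\ltimes(\semifield^\times)^n\cong\GL(V)$. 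This settles well-definedness, and $\nu(I_n)=I_n$ follows at once from $\nu(1)=1$ and $\nu(0)=0$.

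The substantive step is the homomorphism property $\nu(MN)=\nu(M)\nu(N)$, and here one must be careful, since a valuation does not in general commute with addition---only the bend/sub-additivity relation holds. The point is that this addition never actually occurs for monomial matrices. Suppose the unique nonzero entry of $M$ in row $i$ sits in column $k_0$, and the unique nonzero entry of $N$ in column $j$ sits in row $k_1$. Then $M_{ik}N_{kj}=0$ unless $k=k_0=k_1$, so the sum $(MN)_{ij}=\sum_k M_{ik}N_{kj}$ has at most one nonzero summand. Consequently $\nu\bigl((MN)_{ij}\bigr)$ equals $\nu(M_{ik_0})\nu(N_{k_0j})$ when $k_0=k_1$ and equals $\nu(0)=0$ otherwise, using only multiplicativity of $\nu$ and $\nu(0)=0$. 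On the other side, $\bigl(\nu(M)\nu(N)\bigr)_{ij}=\sum_k\nu(M_{ik})\nu(N_{kj})$, and every summand with $k\neq k_0$ already vanishes because $\nu(M_{ik})=\nu(0)=0$; the two expressions therefore agree entry by entry.

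The only place demanding real care is this multiplication step, and the lesson is entirely structural: entry-wise valuation fails to be multiplicative for arbitrary matrices precisely because $\nu$ is not additive, but the one-nonzero-entry-per-row-and-column condition guarantees that each entry of a product of monomial matrices is a sum with at most one nonzero term, on which $\nu$ acts by pure multiplicativity. I expect no further subtleties beyond confirming this sparsity bookkeeping.
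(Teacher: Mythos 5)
Your proposal is correct and follows essentially the same route as the paper, which notes that the result is immediate from (1) multiplicativity of valuations and (2) the fact that the entries of the product of two monomial matrices are monomials in the entries (i.e., involve no genuine addition). You simply spell out in full the sparsity bookkeeping that the paper treats as immediate, including the routine checks of well-definedness and preservation of the identity.
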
 

\begin{proof}
This follows immediately from the facts that (1) valuations are multiplicative, and (2) the polynomial entries of the matrix multiplication map \[\GL(W) \times \GL(W) \rightarrow \GL(W),\] when restricted to the loci of monomial matrices, are monomials. 
\end{proof}

This lemma allows us to tropicalize monomial representations:

\begin{definition}
The \emph{tropicalization} of a monomial representation $\rho$ of a group $G$ over a field $\field$ is the linear representation over $\semifield$ defined by the composition \[G \xrightarrow{\rho} \Sigma_n\ltimes (\field^\times)^n \xrightarrow{\nu} \GL(V).\]
\end{definition}

\begin{proposition}
Fix a monomial representation $G \rightarrow \Sigma_n\ltimes (\field^\times)^n \subseteq \GL(W)$ and the corresponding tropicalized representation $G \rightarrow \GL(V)$ over $\semifield$.  If $L\subseteq W$ is any subspace, then for every $g\in G$ we have \[\trop(g\cdot L) = g \cdot \trop(L).\]
\end{proposition}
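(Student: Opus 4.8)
The plan is to reduce the claimed equality of tropical linear spaces to a single pointwise identity describing how the valuation interacts with the monomial action, and then to deduce the equality of the resulting $\semifield$-submodules from the fact that $g$ acts on $V$ by an $\semifield$-module automorphism. First I would fix notation mirroring the two preceding proofs: write $g\in G$ as the product of a permutation $\sigma\in\Sigma_n$ and a diagonal matrix over $\field$ with entries $a_1,\ldots,a_n\in\field^\times$, so that the tropicalized action on $V$ is given by the same $\sigma$ together with the diagonal entries $\lambda_i := \nu(a_i)\in\semifield^\times$. For a vector $w = \sum_i w_i e_i \in W$, the coefficient of $e_j$ in $g\cdot w$ is then the single monomial $a_{\sigma^{-1}(j)}\, w_{\sigma^{-1}(j)}$, with no addition occurring.

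The key step will be to establish the pointwise identity
\[\nu(g\cdot w) = g\cdot \nu(w)\qquad\text{for all } w\in W,\]
where on the left $\nu$ denotes coefficient-wise valuation. I would verify this coordinate-by-coordinate: the $e_j$-coefficient of the left-hand side is $\nu\!\left(a_{\sigma^{-1}(j)} w_{\sigma^{-1}(j)}\right) = \lambda_{\sigma^{-1}(j)}\,\nu(w_{\sigma^{-1}(j)})$ by multiplicativity of $\nu$, which is exactly the $e_j$-coefficient of $g\cdot\nu(w)$. This identity is the entire conceptual content of the proposition, and it is the one place where the monomial hypothesis is essential: for a general matrix the coefficients of $g\cdot w$ would be genuine sums $\sum_k g_{jk} w_k$, and since a valuation is only subadditive the identity would fail. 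The ``main obstacle'' is therefore not a hard computation but rather this observation---that for monomial matrices the valuation commutes with the action precisely because no addition intervenes.

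Finally I would assemble the set equality from the pointwise one. By definition $\trop(L)$ is the $\semifield$-submodule of $V$ generated by $\{\nu(w)\mid w\in L\}$. Since $g$ acts on $V$ as an $\semifield$-module homomorphism, it carries the submodule generated by any set to the submodule generated by the image of that set; hence $g\cdot\trop(L)$ is generated by $\{g\cdot\nu(w)\mid w\in L\}$. By the pointwise identity this generating set equals $\{\nu(g\cdot w)\mid w\in L\}$, and because $w\mapsto g\cdot w$ is a bijection $L\to g\cdot L$, the latter is precisely $\{\nu(w')\mid w'\in g\cdot L\}$, whose span is $\trop(g\cdot L)$. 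Comparing generating sets yields $g\cdot\trop(L) = \trop(g\cdot L)$, as desired.
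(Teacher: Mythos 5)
Your proof is correct and takes essentially the same approach as the paper, whose own proof is a one-line remark that the result ``follows from the multiplicativity of valuations and the fact that we are only considering monomial representations.'' Your write-up simply fills in the details behind that remark: the coordinatewise identity $\nu(g\cdot w) = g\cdot\nu(w)$ (which is where multiplicativity and the monomial hypothesis enter) and the routine passage from generating sets to the generated $\semifield$-submodules.
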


\begin{proof}
This also follows from the multiplicativity of valuations and the fact that we are only considering monomial representations.
\end{proof}

The preceding proposition shows that we can tropicalize subrepresentations:

\begin{corollary}
If $L\subseteq W$ is $G$-invariant for a monomial representation of $G$, then $\trop(L) \subseteq V$ is $G$-invariant for the tropicalization of this representation.
\end{corollary}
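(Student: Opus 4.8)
The plan is to derive this corollary directly from the preceding equivariance proposition, since that proposition has already done essentially all of the work. The only conceptual ingredient remaining is to unwind what $G$-invariance of $L$ means and transport it across the equivariance identity $\trop(g\cdot L) = g\cdot\trop(L)$.

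First I would recall that saying the subspace $L\subseteq W$ is $G$-invariant means precisely that $g\cdot L = L$ for every $g\in G$. (Because each $g$ acts by an invertible monomial matrix, the inclusion $g\cdot L\subseteq L$ for all $g$ automatically upgrades to equality, but for a genuine subrepresentation we have the honest equality on the nose.) Next I would apply the equivariance of tropicalization established in the preceding proposition: for every $g\in G$,
\[
g\cdot\trop(L) = \trop(g\cdot L) = \trop(L),
\]
where the first equality is the proposition and the second uses $g\cdot L = L$. Since this holds for all $g\in G$, the tropical linear space $\trop(L)\subseteq V$ is $G$-invariant, which is exactly the assertion.

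I would expect there to be no genuine obstacle in this argument, as the substance lies entirely in the equivariance proposition, which in turn rests on the multiplicativity of the valuation $\nu$ and the restriction to monomial representations. If anything required care it would be the passage from set-theoretic invariance of $L$ to invariance of the generated $\semifield$-module $\trop(L)$; but this is already encoded in the definition of $\trop(L)$ as the submodule generated by $\{\nu(w)\mid w\in L\}$ together with the equivariance identity, so no separate verification is needed. The proof is therefore a one-line consequence of the proposition.
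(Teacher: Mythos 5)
Your proof is correct and matches the paper's (implicit) argument exactly: the paper treats this as an immediate consequence of the preceding equivariance proposition, precisely via the chain $g\cdot\trop(L) = \trop(g\cdot L) = \trop(L)$ that you write down. Nothing further is needed.
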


\begin{remark}
Using the translation from tropical to matroidal language discussed in \S\ref{sec:matroidinter}, the preceding corollary says that if $G$ acts monomially on a vector space $W$, then there is an induced action on the matroid associated to any subrepresentation $L\subseteq W$.
\end{remark}

We shall come back to the following notion in \S\ref{sec:regrep} when we study the regular representation of a finite group and its tropicalization:

\begin{definition}\label{def:realized}
For a monomial representation $W$ of $G$ and the corresponding tropicalized representation $V$, a tropical subrepresentation $T \subseteq V$ is \emph{realized} by a subrepresentation of $W$ if $T=\trop(L)$ for a subrepresentation $L \subseteq W$.
\end{definition}

\subsection{New representations from old ones}

While the tensor product of tropical linear spaces is not in general a tropical linear space (cf., \cite{LasVergnas}), and similarly for the exterior product (cf., \cite[Example 3.8]{Crowley-Giansiracusa-Mundinger}), tropical geometry has an interesting pair of constructions, the stable sum and the stable intersection, which do produce new tropical linear spaces from old ones.  We shall show here that stable operations preserve $G$-invariance of tropical linear spaces, and we construct an orthogonal complement operation for tropical linear spaces that also preserves $G$-invariance (and hence gives an analogue, though a fairly enigmatic one, of Maschke's theorem).

If $v\in\ext^d V$ and $v'\in\ext^{d'} V$ are tropical Pl\"ucker vectors such that $v\wedge v'\in \ext^{d+d'}V$ is nonzero, then the latter is a tropical Pl\"ucker vector \cite[Proposition 5.1.2]{GG3} and the corresponding tropical linear space $T_{v\wedge v'}$ is the \emph{stable sum} of $T_v$ and $T_{v'}$ \cite[Proposition 5.1.1]{GG3}.  

If we fix an identification $\ext^n V \cong \semifield$, say by choosing $e_{[n]}$ as the generator, then we have a tropical Hodge star \[\star : \ext^{n-d}V \cong \ext^d V^\vee\] which sends $e_I$ to $x_{[n]\setminus I}$.  If $\semifield$ is totally ordered then the Hodge star sends a valuated matroid to its dual and a tropical linear space to its orthogonal dual: 
\[T_{\star v} = T_v^\perp,\]
where by definition the orthogonal dual $M^\perp \subseteq V^\vee$ of a submodule $M\subseteq V$ is the intersection of all tropical hyperplanes in $M$ (where we view $V \cong V^{\vee \vee}$ as linear forms on $V^\vee$).  See \cite[\S4.4]{GG3}.  The \emph{stable intersection} of $T_v$ and $T_{v'}$ is $T_{\star(\star v \wedge \star v')}$.

\begin{lemma}\label{lem:star}
For a linear $G$-representation $V$ and any $v\in \ext^d V$ and $g\in G$, with regard to the induced linear actions on $\ext^d V$ and $\ext^{n-d} V^\vee$ we have: 
\[g \cdot \star v = \frac{1}{\det(g)} \star(g\cdot v).\]
\end{lemma}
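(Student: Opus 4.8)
The plan is to verify the identity on the standard basis $\{e_I : I\in\binom{[n]}{d}\}$ of $\ext^d V$. Both sides of the claimed equation, regarded as maps $\ext^d V \to \ext^{n-d}V^\vee$ in the variable $v$, are $\semifield$-linear: the group action is given by a monomial matrix and hence is $\semifield$-linear, and $\star$ is $\semifield$-linear by its very definition on basis elements. Since the $e_I$ span $\ext^d V$, it therefore suffices to check the identity when $v=e_I$.

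First I would record the explicit actions on basis vectors. Writing $g$ as the product of a permutation $\sigma$ and a diagonal matrix with entries $\lambda_1,\ldots,\lambda_n$ (exactly as in the proof of Proposition \ref{prop:DressAct}) and setting $\lambda_J := \prod_{i\in J}\lambda_i$ for $J\subseteq[n]$, the induced action on $\ext^d V$ is $g\cdot e_I = \lambda_I e_{\sigma(I)}$. For the dual, the contragredient action on $V^\vee$ sends $x_i \mapsto \lambda_i^{-1}x_{\sigma(i)}$; a direct check that this respects composition confirms it is a genuine left $G$-action, and it follows that on $\ext^{n-d}V^\vee$ we have $g\cdot x_J = \lambda_J^{-1}x_{\sigma(J)}$.

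The remaining ingredient, and the step I expect to matter most, is to pin down $\det(g)$. In the idempotent setting there is no sign, so one cannot appeal to the classical signed determinant; instead the correct notion is the scalar by which $g$ acts on the one-dimensional top exterior power $\ext^n V\cong\semifield$. Since $\sigma$ permutes $[n]$, we have $g\cdot e_{[n]} = \lambda_{[n]}e_{\sigma([n])} = \lambda_{[n]}e_{[n]}$, so $\det(g)=\lambda_{[n]} = \prod_{i=1}^n\lambda_i$ (a tropical determinant that is really a permanent). Getting this identification right is the crux; everything else is bookkeeping with $\sigma$ and complementary index sets.

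With these facts in hand the verification is a short computation. For $v=e_I$, using $\sigma([n]\setminus I) = [n]\setminus\sigma(I)$, the left-hand side is
\[
g\cdot\star e_I = g\cdot x_{[n]\setminus I} = \lambda_{[n]\setminus I}^{-1}\,x_{[n]\setminus\sigma(I)},
\]
while the right-hand side is
\[
\frac{1}{\det(g)}\star(g\cdot e_I) = \frac{\lambda_I}{\lambda_{[n]}}\,\star e_{\sigma(I)} = \frac{\lambda_I}{\lambda_{[n]}}\,x_{[n]\setminus\sigma(I)}.
\]
These agree because $\lambda_{[n]} = \lambda_I\,\lambda_{[n]\setminus I}$, which completes the proof.
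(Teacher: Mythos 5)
Your proof is correct and follows essentially the same route as the paper's: reduce to basis elements by linearity, write $g$ as a permutation $\sigma$ composed with a diagonal matrix, and compare both sides using the two facts $\sigma([n]\setminus I)=[n]\setminus\sigma(I)$ and $\det(g)=\lambda_{[n]}=\lambda_I\lambda_{[n]\setminus I}$. The only difference is presentational: you spell out the contragredient action and the identification of $\det(g)$ as the scalar by which $g$ acts on $\ext^n V$, which the paper leaves implicit.
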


\begin{proof}
By linearity, it suffices to check this on basis elements.  If we write the matrix in $\GL(V)$ representing $g\in G$ as a product of a permutation $\sigma$ and a diagonal matrix with diagonal entries $\lambda_1,\ldots,\lambda_n$, and for any $J\subseteq [n]$ we set $\lambda_J := \prod_{i\in J}\lambda_i$, then we have
\[g\cdot e_I = \lambda_I e_{\sigma(I)} \stackrel{\star}{\longmapsto} \lambda_I x_{[n]\setminus \sigma(I)},\]
whereas
\[\star e_I = x_{[n]\setminus I} \stackrel{g\cdot}{\longmapsto} \frac{1}{\lambda_{[n]\setminus I}}x_{\sigma([n]\setminus I)}.\]
The assertion then follows from the observations that (1) $\sigma([n]\setminus I) = [n]\setminus \sigma(I)$ and (2) $\det(g) = \lambda_{[n]} = \lambda_{I}\lambda_{[n]\setminus I}$.
\end{proof}

\begin{theorem}\label{thm:stable}
Fix a linear representation $G \rightarrow \GL(V)$.  The stable sum and stable intersection of tropical subrepresentations is a tropical subrepresentation, and if $\semifield$ is totally ordered then the orthogonal dual of a tropical subrepresentation is a tropical subrepresentation of the dual representation $G \rightarrow \GL(V^\vee)$.
\end{theorem}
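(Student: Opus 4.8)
The plan is to reduce every assertion to the fixed-point characterization of tropical subrepresentations provided by Corollary \ref{cor:DressAct}: a dimension $d$ tropical subrepresentation is the same as a $G$-fixed point of the induced action on the Dressian $Dr(d,n) \subseteq \PP(\ext^d V)$. Concretely, if $T_v$ is a tropical subrepresentation with tropical Pl\"ucker vector $v \in \ext^d V$, then $v$ is a fixed point of the projective action, which means that for each $g \in G$ we have $g \cdot v = \lambda_g v$ for some scalar $\lambda_g \in \semifield^\times$. Since all three constructions are defined at the level of Pl\"ucker vectors (via the wedge product and the Hodge star $\star$), it suffices to check that each operation sends vectors fixed up to a scalar to vectors fixed up to a scalar; the fact that the outputs genuinely satisfy the tropical Pl\"ucker relations is already supplied by the results of \cite{GG3} invoked when the constructions were defined.

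For the stable sum, I would use that the induced $G$-action on the exterior algebra $\ext V$ is by algebra automorphisms, so that $g \cdot (v \wedge v') = (g \cdot v) \wedge (g \cdot v')$; this is exactly how the action was computed on basis elements in the proof of Proposition \ref{prop:DressAct}. If $g \cdot v = \lambda_g v$ and $g \cdot v' = \mu_g v'$, then $g \cdot (v \wedge v') = \lambda_g \mu_g (v \wedge v')$, so $v \wedge v'$ is again fixed up to a scalar and hence $T_{v \wedge v'}$ is a tropical subrepresentation.

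For the orthogonal dual, I would apply Lemma \ref{lem:star}, which gives $g \cdot \star v = \tfrac{1}{\det(g)}\star(g \cdot v)$. If $v$ is fixed up to a scalar then $g \cdot \star v = \tfrac{\lambda_g}{\det(g)}\star v$, so $\star v \in \ext^{n-d}V^\vee$ is fixed up to a scalar under the induced action on $V^\vee$; since $\semifield$ is totally ordered we have $T_{\star v} = T_v^\perp$, which is therefore a tropical subrepresentation of the dual representation $G \rightarrow \GL(V^\vee)$. The stable intersection $T_{\star(\star v \wedge \star v')}$ is then handled by composing these steps: $\star v$ and $\star v'$ are fixed up to scalars by the intertwining relation of Lemma \ref{lem:star} (which requires no ordering hypothesis), their wedge is fixed up to a scalar by the stable-sum argument applied in $\ext V^\vee$, and one further application of $\star$ returns a vector fixed up to a scalar. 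This explains why the stable intersection, unlike the orthogonal dual, needs no total-order assumption: the geometric identification $T_{\star v}=T_v^\perp$ is not needed, only the purely formal intertwining of Lemma \ref{lem:star}.

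I expect the only thing requiring care---rather than any genuine difficulty---to be the bookkeeping of the scalar factors $\lambda_g,\mu_g$ and the determinant $\det(g)$. Each of these is a unit in $\semifield^\times$ and hence disappears upon passing to the projective space, so once one works projectively every construction transparently preserves the property of being $G$-fixed, and the theorem follows immediately from Corollary \ref{cor:DressAct}.
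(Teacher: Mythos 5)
Your proposal is correct and follows essentially the same route as the paper's proof: both reduce everything to the fixed-point characterization of Corollary \ref{cor:DressAct}, handle the stable sum via $g\cdot(v\wedge v') = (g\cdot v)\wedge(g\cdot v') = \lambda_g\mu_g(v\wedge v')$, and handle the orthogonal dual and stable intersection via the scalar intertwining relation of Lemma \ref{lem:star}. Your observation that the stable intersection needs no total-order hypothesis (only the formal intertwining, not the identification $T_{\star v}=T_v^\perp$) is likewise implicit in the paper's argument, which tracks the scalar $\lambda\lambda'/\det(g)^3$ explicitly where you simply note that units vanish projectively.
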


\begin{proof}
The claim about the orthogonal dual follows immediately from the facts that (1) a tropical linear space is $G$-invariant if and only if its projectivized tropical Pl\"ucker vector is a fixed point for the induced action on the Dressian (Corollary \ref{cor:DressAct}), and (2) up to scalar, the Hodge star commutes with the $G$-action (Lemma \ref{lem:star}).

Now fix tropical Pl\"ucker vectors $v\in \ext^d V$ and $v'\in\ext^{d'} V$ with $v\wedge v' \ne 0$, and suppose $T_v$ and $T_{v'}$ are subrepresentations.  Also fix an element $g\in G$.  Using observation (1) above we can write $g\cdot v = \lambda v$ and $g\cdot v' = \lambda' v'$ for some nonzero scalars $\lambda,\lambda'\in\semifield^\times$.  We then have 
\[g\cdot (v\wedge v') = (g\cdot v)\wedge (g\cdot v') = (\lambda v)\wedge (\lambda' v') = \lambda\lambda'(v\wedge v'),\] so by applying observation (1) again we see that the stable sum is indeed $G$-invariant.

Finally, by using Lemma \ref{lem:star} twice more we have 
\begin{align*}
g\cdot \star(\star v\wedge \star v') &= \frac{1}{\det(g)} \star (g \cdot (\star v \wedge \star v'))\\
&= \frac{1}{\det(g)} \star ((g\cdot \star v)\wedge (g \cdot \star v'))\\
&= \frac{1}{\det(g)^3} \star (\star (g \cdot v)\wedge \star (g\cdot v'))\\
&= \frac{\lambda\lambda'}{\det(g)^3} \star (\star v\wedge \star v'),
\end{align*}
so once again applying observation (1) shows that the stable intersection is indeed $G$-invariant.
\end{proof}

We have seen that if $T \subseteq V$ is a tropical subrepresentation, then so is the orthogonal tropical linear space $T^\perp \subseteq V^\vee$; but the latter lives in the dual space and it would be useful to have some kind of complementary tropical linear space to $T$ that lives in $V$ yet is still $G$-invariant.  The way to accomplish this is with an isomorphism $V^\vee \cong V$, or equivalently a non-degenerate symmetric bilinear form, but we must be careful here to use a $G$-invariant form.  Fortunately, at least when $G$ is finite we can average the standard inner product over $G$-orbits quite analogously to the standard classical proof of Maschke's theorem that $G$-invariant subspaces admit $G$-invariant complements:

\begin{theorem}\label{thm:complement}
If $G$ is a finite group and $\langle -, - \rangle$ is the standard inner product on $V$, then \[\langle -, - \rangle_G := \sum_{g\in G} \langle g\cdot -, g\cdot -\rangle\] is a non-degenerate symmetric bilinear form.  If $\semifield$ is totally ordered and $T\subseteq V$ is a tropical subrepresentation, then the isomorphism $V^\vee \cong V$ induced by $\langle -, - \rangle_G$ sends $T^\perp$ to a tropical subrepresentation of $V$.
\end{theorem}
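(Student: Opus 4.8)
The plan is to treat the two assertions in turn, mirroring the classical Maschke argument while keeping track of the idempotent subtleties. For the first assertion, symmetry and bilinearity are immediate, since $\langle -,-\rangle_G$ is a finite sum of the symmetric bilinear forms $\langle g\cdot-,g\cdot-\rangle$. For non-degeneracy I would compute the form explicitly in the given basis. Writing each $g$ as a permutation $\sigma$ composed with a diagonal matrix with entries $\lambda_1(g),\ldots,\lambda_n(g)\in\semifield^\times$, and using $\langle e_i,e_j\rangle=\delta_{ij}$, the cross terms collapse because $\sigma$ is a bijection, giving $\langle g\cdot v, g\cdot w\rangle = \sum_i \lambda_i(g)^2 a_i b_i$ for $v=\sum a_i e_i$ and $w=\sum b_i e_i$. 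Summing over $g$ yields the diagonal form $\langle v,w\rangle_G=\sum_i c_i a_i b_i$ with $c_i := \sum_{g\in G}\lambda_i(g)^2$. The key idempotent input is that a sum of nonzero elements of $\semifield$ is nonzero (since $0$ is the bottom of the natural partial order and $a\le a+b$), so each $c_i$ is a unit; hence the induced map $V\to V^\vee$, $v\mapsto\sum_i c_i a_i x_i$, is an invertible diagonal map. Alternatively one could just observe injectivity and invoke Proposition \ref{prop:nondegen}.

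For the second assertion I would first record that $\langle-,-\rangle_G$ is $G$-invariant: reindexing the defining sum by $h\mapsto hg$ gives $\langle g\cdot v, g\cdot w\rangle_G = \langle v,w\rangle_G$. Denoting by $\varphi\colon V\to V^\vee$ the induced isomorphism and by $\Phi := \varphi^{-1}\colon V^\vee\to V$ its inverse, I would then check that $\varphi$ is $G$-equivariant for the contragredient action on $V^\vee$: for all $w$ one has $(g\cdot\varphi(v))(w)=\varphi(v)(g^{-1}\cdot w)=\langle v, g^{-1}\cdot w\rangle_G=\langle g\cdot v, w\rangle_G=\varphi(g\cdot v)(w)$, where the third equality is exactly $G$-invariance of the form. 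Consequently $\Phi$ is a $G$-equivariant isomorphism as well.

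It then remains to combine two facts. First, since $\semifield$ is totally ordered and $T\subseteq V$ is a tropical subrepresentation, Theorem \ref{thm:stable} guarantees that $T^\perp\subseteq V^\vee$ is a tropical subrepresentation of the dual representation, i.e. a $G$-invariant tropical linear space. Second, $\Phi$ is monomial---indeed diagonal, by the computation above (or because any isomorphism of free $\semifield$-modules is monomial)---and so it carries tropical linear spaces to tropical linear spaces, preserving the tropical Pl\"ucker relations by the same computation as in Proposition \ref{prop:DressAct}. Since $\Phi$ is moreover $G$-equivariant and $T^\perp$ is $G$-invariant, $\Phi(T^\perp)$ is a $G$-invariant tropical linear space in $V$, i.e. a tropical subrepresentation, which is what we want. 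The steps requiring the most care are the $G$-equivariance of $\varphi$ and the non-degeneracy argument: both hinge on idempotent-specific facts (no additive cancellation, and monomiality of linear isomorphisms) that have no classical analogue, whereas the overall skeleton is a direct transcription of the classical averaging proof of Maschke's theorem.
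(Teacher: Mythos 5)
Your proof is correct and follows essentially the same route as the paper's: diagonalize the averaged form using that monomial matrices preserve the standard inner product up to squared scalars, use that a sum of nonzero elements in an idempotent semifield is nonzero, establish $G$-invariance and hence $G$-equivariance of the induced isomorphism $V\cong V^\vee$, and then combine Theorem \ref{thm:stable} with the fact that isomorphisms of free $\semifield$-modules preserve tropical linear spaces. The only (harmless) difference is that your explicit diagonalization shows the induced map $V\to V^\vee$ is directly invertible, making Proposition \ref{prop:nondegen} optional, whereas the paper proves only injectivity and then invokes that proposition for surjectivity.
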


\begin{proof}
To show that $\langle -, - \rangle_G$ is non-degenerate, it suffices by Proposition \ref{prop:nondegen} to show that the induced map $V \rightarrow V^\vee$ sending $v = \sum v_i e_i$ to $\langle v, - \rangle_G$ is injective.  If we write each $g\in G$ as the product of a permutation $\sigma_g\in\Sigma_n$ and a diagonal matrix with diagonal entries $\lambda_{g,1},\ldots,\lambda_{g,n}$, then for each $1 \le j \le n$ we have 
\[\langle v, e_j \rangle_G = \sum_{g\in G} \left\langle \sum_i v_i\lambda_{g,i}e_{\sigma_g(i)}, \lambda_{g,j}e_{\sigma_g(j)} \right\rangle = \sum_{g\in G}v_j\lambda_{g,j}^2 = v_j\sum_{g\in G}\lambda_{g,j}^2.\]
In an idempotent semiring, if a sum is zero then all terms in the sum must be zero, so $\sum\lambda_{g,j}^2$ is a nonzero constant that depends only on the group $G$ and the representation, not on $v$.  Thus we can recover each coefficient $v_j$ from this pairing, which proves the desired injectivity.

It is clear from the definition that $\langle -, - \rangle_G$ is $G$-invariant.  It follows that the isomorphism $V \cong V^\vee$, $v\mapsto \langle v,-\rangle_G$, is $G$-equivariant.  Indeed, for $g\in G$ and $v\in V$ this $G$-invariance yields 
\[\langle g\cdot v, - \rangle_G = \langle v, g^{-1}\cdot -\rangle_G,\] and by the definition of the dual action, for any $\varphi\in V^\vee$ we have $g\cdot \varphi = \varphi \circ g^{-1}$, so \[g\cdot \langle v,-\rangle_G = \langle v, g^{-1}\cdot -\rangle_G.\]

For $T\subseteq V$ a $G$-invariant tropical linear space, we have shown in Theorem \ref{thm:stable} that the orthogonal tropical linear space $T^\perp \subseteq V^\vee$ is $G$-invariant with respect to the dual action.  Since the isomorphism $V^\vee \cong V$ is $G$-equivariant, the image of $T^\perp$ in $V$ is $G$-invariant; it is also a tropical linear space since any isomorphism between free $\semifield$-modules sends tropical linear spaces to tropical linear spaces.
\end{proof}

In some sense this theorem can be viewed as a tropical analogue of Maschke's theorem, since for any tropical subrepresentation it produces a subrepresentation of the complementary dimension.  However, such a pair of submodules very rarely gives a direct sum decomposition of $V$.  In fact, the following example shows that the complementary tropical linear space may be contained in the original tropical linear space.

\begin{example}
Consider the defining representation $W \cong \field^3$ of the symmetric group $G := \Sigma_3$ and its tropicalization $V \cong \semifield^3$.  The standard inner product on $V$ is $G$-invariant, by idempotency of addition, so the induced isomorphism $V^\vee \cong V$ sends $x_i$ to $e_i$.  On the classical side, the complement of the trivial representation is the regular representation, whereas on the tropical side the trivial representation is spanned by $e_1+e_2+e_3$ so its complement, the tropical linear space associated to the uniform matroid $U_{2,3}$, is spanned by $e_i+e_j$ for all $i\ne j$.  Therefore in this tropical setting the two-dimensional complement to the trivial representation contains the trivial representation.  
\end{example}

\begin{remark}
We do not know whether there is a useful notion of \emph{irreducibility} for tropical representations.  However, this may not be cause for concern (and indeed irreducibility is mysterious and largely absent from tropical geometry more generally).  To wit, we believe that tropical representation theory is really about the way invariant tropical linear spaces sit inside the ambient free module, rather than about the intrinsic module-theoretic structure of representations.  This perspective is explored in detail in the following section.
\end{remark}

\section{The regular representation}\label{sec:regrep}

Any permutation representation is a monomial representation and hence may be tropicalized.  A natural, and important, example of a permutation representation is the (classical) regular representation $\field[G]$ of a finite group $G$.  In this section we study its trivial-valuation tropicalization $\B[G]$---which could reasonably be termed the \emph{Boolean} or \emph{matroidal} regular representation.  

By Proposition \ref{prop:matroid}, the tropical subrepresentations of $\B[G]$ are precisely the matroids on ground set $G$ for which left-multiplication by each element of $G$ is a matroid automorphism.  When such a subrepresentation is realized by a subrepresentation of $\field[G]$, we shall simply say that the subrepresentation of $\B[G]$ is \emph{realizable}.  This means precisely that the matroid associated to the subrepresentation of $\B[G]$ is the matroid of the corresponding subrepresentation of $\field[G]$.  In other words, the realizable subrepresentations of the Boolean regular representation are the matroids of the subrepresentations of the classical regular representation.  In particular, since every irreducible $\field$-representation is naturally a subrepresentation of $\field[G]$, the matroid of every irreducible $\field$-representation (as embedded in the classical regular representation) appears in the Boolean regular representation $\B[G]$.

The Boolean regular representation is usually much larger than the locus of realizable subrepresentations.  For instance, even though every rank two matroid is representable (as the matroid of some linear subspace of a vector space), we will see below that realizability as a subrepresentation of the regular representation reflects some group-theoretic structure of $G$.

Throughout this section we fix a finite group $G$ and equip the field of complex numbers with the trivial valuation $\nu : \C \rightarrow \B$.  We begin by showing that nothing interesting happens in dimension and codimension one.

\begin{proposition}\label{prop:dimone}
The only dimension one subrepresentation of $\B[G]$ is the trivial representation, which corresponds to the uniform matroid $U_{1,|G|}$; this subrepresentation is always realizable.  There is only one subrepresentation of codimension one, and it also corresponds to the uniform matroid, namely $U_{|G|-1,|G|}$, and is always realizable.
\end{proposition}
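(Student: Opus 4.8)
The plan is to translate the entire statement into matroid language via Proposition~\ref{prop:matroid} and then exploit the fact that the left-regular action of $G$ on itself is transitive. By Proposition~\ref{prop:matroid}, a tropical subrepresentation of $\B[G]$ is exactly a matroid $M$ on ground set $G$ for which left-multiplication $x \mapsto gx$ is an automorphism of $M$ for every $g \in G$; since $\B^\times = \{1\}$, the ambient linear representation is just the left-regular permutation representation $G \to \Sigma_{|G|}$. The crucial observation is that this action is transitive, so the only $G$-invariant subsets of the ground set are $\emptyset$ and all of $G$.

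First I would settle uniqueness in dimension one. A rank-one matroid is determined by its set of loops, since the non-loops form a single parallel class. Automorphisms preserve rank and hence preserve the loop set, so the loop set is $G$-invariant and therefore equals $\emptyset$ or all of $G$ by transitivity; the latter would force rank zero, so the loops are empty and $M = U_{1,|G|}$. For codimension one I would pass to the dual matroid: by Lemma~\ref{lem:MatAut} a permutation is an automorphism of $M$ if and only if it is an automorphism of $M^*$, so $M$ is a tropical subrepresentation precisely when $M^*$ is. Since $M^*$ then has rank one, the previous case gives $M^* = U_{1,|G|}$, and dualizing yields $M = U_{|G|-1,|G|}$. (Equivalently, one checks directly that the coloops of $M$ form a $G$-invariant set, again forcing the uniform matroid.)

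For realizability in the sense of Definition~\ref{def:realized} I would exhibit explicit subrepresentations of $\C[G]$ and tropicalize with the trivial valuation $\nu : \C \to \B$. The trivial representation sits inside $\C[G]$ as the $G$-fixed line spanned by $\mathbf 1 := \sum_{g \in G} g$; its classical Pl\"ucker vector is $(1,\ldots,1)$, whose coefficient-wise valuation is the all-ones vector, which is exactly the basis-indicator of $U_{1,|G|}$. For codimension one I would use the augmentation kernel $H = \{\sum_g c_g\, g : \sum_g c_g = 0\}$, a $G$-invariant hyperplane. To identify its matroid as $U_{|G|-1,|G|}$, observe that for each $j$ the standard basis vector $e_j$ has nonzero augmentation, so $e_j \notin H$ and the coordinate projection $H \to \C^{G \setminus \{j\}}$ is injective, hence an isomorphism by dimension count; thus every $(|G|-1)$-subset is a basis.

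I expect the only mildly delicate point to be the bookkeeping of tropicalization conventions, namely confirming via the recorded fact that the tropical Pl\"ucker vector of $\trop(L)$ is the coefficient-wise valuation of the classical Pl\"ucker vector, and that these valuations are all nonzero under the trivial valuation. Everything else is forced by transitivity of the regular action together with matroid duality, so I anticipate no real obstacle; the substance lies entirely in recognizing that the regular action leaves no room for non-uniform behavior at the two extreme ranks.
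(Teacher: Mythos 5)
Your proof is correct, and its skeleton matches the paper's: transitivity of the regular action pins down rank one, duality handles corank one, and the trivial representation and the augmentation kernel realize the two uniform matroids. The execution differs in formalism, though. For rank one the paper argues directly on the tropical linear space: over $\B$ a rank-one space is $\{0,v\}$ for a single vector $v=\sum v_g e_g$, and transitivity forces every $v_g=1$; you instead argue on the matroid via its loop set, which is equivalent but stays entirely in matroid language. For corank one the paper invokes Lemma \ref{lem:star} (the Hodge star intertwines the $G$-action up to $1/\det(g)$, which is trivial over $\B$ since the only nonzero element is its own inverse), i.e.\ valuated-matroid duality on Pl\"ucker vectors, whereas you use plain matroid duality; one caveat is that the fact you attribute to Lemma \ref{lem:MatAut} (a permutation is an automorphism of $M$ if and only if it is one of $M^*$) appears inside the \emph{proof} of that lemma rather than in its statement, so you should cite it as such or reprove it in a line. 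For realizability of $U_{|G|-1,|G|}$ the paper writes an explicit $|G|\times(|G|-1)$ matrix for the augmentation kernel and observes that all maximal minors are nonzero; your injective-projection and dimension-count argument is an equivalent, coordinate-free verification of the same nonvanishing. Net effect: your route is marginally more elementary and self-contained on the matroid side, while the paper's Hodge-star route has the advantage of fitting the general-$\semifield$ formalism it develops and reuses elsewhere.
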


\begin{proof}
Since a rank one tropical linear space is spanned by a single vector, and since $\B$ has only two elements, we see that a dimension one subrepresentation consists of the zero vector and a single nonzero vector $\sum v_g e_g \in \B[G]$ that is fixed by every $g\in G$.  Since each $v_g$ is either 0 or 1, the fact that at least one $v_g$ is 1 and that the left-multiplication action of $G$ on itself is transitive implies that every $v_g$ is 1.  This means we have the trivial representation $\langle \sum e_g \rangle$ which clearly corresponds to the uniform rank one matroid and is realized by the trivial representation in $\C[G]$.

This proves the assertions about dimension one, and the assertions about codimension one, except for realizability, then all follow immediately from Lemma \ref{lem:star} and the fact that the only nonzero element of $\B$ is equal to its own inverse.  We claim that $U_{|G|-1,|G|}$ is realized as a subrepresentation by the orthogonal complement to the trivial representation in $\C[G]$, namely $\{\sum w_g e_g \in W~|~ \sum w_g = 0\}$.  Indeed, a matrix defining this this latter subspace is
\[\left(\begin{array}{cccc}1 & 1 & \cdots & 1 \\-1 & 0 & \cdots & 0 \\0 & -1 & \ddots & \vdots \\ \vdots & 0 & \ddots & 0 \\0 & \vdots & \ddots & -1\end{array}\right)\]
which clearly has all $|G|$ maximal minors nonzero.
\end{proof}

Next, we have the following simple but important observation:
\begin{proposition}\label{prop:unif}
The tropical linear space corresponding to the uniform matroid $U_{d,|G|}$, for any rank $1 \le d \le |G|$, is a subrepresentation of $\B[G]$.
\end{proposition}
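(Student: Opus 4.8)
The plan is to reduce everything to the matroidal interpretation established in Proposition \ref{prop:matroid}, which identifies a tropical subrepresentation of $\B[G]$ with a group homomorphism $G \rightarrow \Aut(M(T))$. Since $\B^\times = \{1\}$, the linear representation $\B[G]$ is purely the left-multiplication permutation action, i.e., the homomorphism $G \rightarrow \Sigma_{|G|}$ sending $g$ to the permutation $h \mapsto gh$ of the ground set $G$. Thus, to show that $U_{d,|G|}$ is a subrepresentation, it suffices to check that this homomorphism factors through $\Aut(U_{d,|G|})$.

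The crux is the (entirely elementary) observation that the uniform matroid $U_{d,n}$ on any $n$-element ground set has \emph{every} $d$-subset as a basis, so that the independent sets are precisely the subsets of size at most $d$. Consequently any permutation of the ground set automatically sends independent sets to independent sets, which means $\Aut(U_{d,n}) = \Sigma_n$ is the full symmetric group. Applying this with $n = |G|$, the left-multiplication homomorphism $G \rightarrow \Sigma_{|G|}$ lands inside $\Aut(U_{d,|G|})$ without any condition at all.

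I expect there to be essentially no obstacle: the entire content is that the uniform matroid is invariant under \emph{every} permutation of its ground set, which is immediate from the definition. Feeding this back through Proposition \ref{prop:matroid} yields that $U_{d,|G|}$ is a tropical subrepresentation of $\B[G]$ for each $1 \le d \le |G|$, as claimed. (If one prefers to avoid invoking the matroidal dictionary, one can instead argue directly on tropical Pl\"ucker vectors via Corollary \ref{cor:DressAct}: the Pl\"ucker vector of $U_{d,|G|}$ is the all-ones vector $\sum_{I} e_I \in \ext^d \B[G]$, which is visibly fixed by every monomial permutation matrix over $\B$ and hence is a fixed point of the $G$-action on the Dressian.)
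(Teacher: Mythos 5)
Your proposal is correct, but your primary route differs from the paper's. The paper argues entirely on the Pl\"ucker/Dressian side: it writes down the tropical Pl\"ucker vector $\sum_{I\in\binom{G}{d}}e_I$ of $U_{d,|G|}$, observes that left-multiplication permutes the size-$d$ subsets of $G$ and hence fixes this vector, and concludes by Corollary \ref{cor:DressAct} --- which is exactly the argument you relegate to your closing parenthetical. Your main argument instead passes through the matroidal dictionary of Proposition \ref{prop:matroid}, reducing everything to the observation that $\Aut(U_{d,n}) = \Sigma_n$ because the independent sets of a uniform matroid (all sets of size at most $d$) are preserved by every permutation of the ground set. Both are legitimate one-step proofs, and they buy slightly different things: your route makes transparent the stronger statement that $U_{d,|G|}$ is invariant under the \emph{entire} symmetric group, not just the image of $G$, so invariance holds for any permutation representation whatsoever; the paper's route stays inside the Pl\"ucker-vector formalism that drives the rest of the paper, and that style of argument is the one that continues to function over general idempotent semifields and for genuinely monomial (non-permutation) actions, where the Boolean matroid dictionary of Proposition \ref{prop:matroid} is no longer available. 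One small bookkeeping point: Proposition \ref{prop:matroid} is stated as an equivalence for a given tropical $G$-representation, and the direction you need (factoring through $\Aut(M(T))$ implies $G$-invariance of $T$) is what its proof establishes, so your use of it is justified, but it is the biconditional in that proof, rather than the bare statement, that you are invoking.
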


\begin{proof}
The tropical Pl\"ucker vector in question is 
\[\sum_{I\in \binom{G}{d}} e_I.\]  Since left-multiplication by $G$ permutes the elements of $G$, it also permutes the size $d$ subsets of $G$, and so this tropical Pl\"ucker vector is fixed by the $G$-action; the result then follows from Corollary \ref{cor:DressAct}.
\end{proof}

Realizability of these uniform subrepresentations outside the dimension one and codimension one cases, as we explore below, is a much subtler matter.

\subsection{Finite abelian groups}

In this subsection we let $G$ be a finite abelian group.  In this case all the irreducible representations of $G$ are one-dimensional.  Let us enumerate the group as $G = \{g_1,\ldots,g_s\}$ and denote the irreducible representations by $\rho_i : G \rightarrow \C^\times$ for $i=1,\ldots,s$.  Each such irreducible representation uniquely embeds as a subrepresentation of the regular representation $\C[G]$; concretely, $\rho_i$ is given by the line \[L_i := \langle\rho_i(g_1)e_{g_1^{-1}} + \cdots + \rho_i(g_s)e_{g_s^{-1}}\rangle.\]  A dimension $d$ subrepresentation of $\C[G]$ must then be the direct sum of $d \le s$ distinct $L_i$, say $L_{i_1},\ldots,L_{i_d}$, which means its Pl\"ucker vector is the vector of maximal minors of the matrix
\[\left(\begin{array}{ccc}\rho_{i_1}(g_1) & \cdots & \rho_{i_1}(g_s) \\ \vdots & \ddots & \vdots \\\rho_{i_d}(g_1) & \cdots & \rho_{i_d}(g_s)\end{array}\right).\]

\begin{theorem}\label{thm:cyclic}
A finite abelian group $G$ is cyclic if and only if the subrepresentation corresponding to the rank two uniform matroid $U_{2,|G|}$ is realizable.
\end{theorem}

\begin{remark}
The abelian hypothesis on $G$ cannot be omitted in this theorem.  For instance, we show in \S\ref{sec:ord6symmetric} that for the symmetric group on three letters the uniform two-dimensional subrepresentation is not realized by a direct sum of one-dimensional representations, but it \emph{is} realized by a sufficiently general copy of the (irreducible two-dimensional) standard representation.
\end{remark}

\begin{proof}
By the preceding discussion, the tropical linear space associated to $U_{2,|G|}$ is realized as a subrepresentation if and only if there exists a pair of distinct characters $\rho_a, \rho_b$ such that 
\[\det \left(\begin{array}{cc}\rho_a(g_i) & \rho_a(g_j) \\ \rho_b(g_i) & \rho_b(g_j)\end{array}\right) \ne 0\]
for all $i\ne j$.  Since 1-dimensional characters form a group, this determinant \[\rho_a(g_i)\rho_b(g_j) - \rho_a(g_j)\rho_b(g_i)\] is nonzero iff \[\rho_{ab^{-1}}(g_i) \ne \rho_{ab^{-1}}(g_j).\]  Therefore, realizability is equivalent to the existence of an injective irreducible character, i.e., a faithful 1-dimensional representation.  If $G$ is cyclic then sending a generator to a primitive $|G|$-th root of unity gives a faithful representation $G \hookrightarrow \C^\times$, and conversely any finite subgroup of $\C^\times$ is cyclic so if a faithful 1-dimensional representation exists then $G$ must be cyclic.
\end{proof}

\begin{theorem}\label{thm:prime2dim}
Let $p$ be an odd prime satisfying either of the following conditions: (i) 2 is a primitive root of $p$, or (ii) $p \equiv 7~(\mathrm{mod}~8)$ and 2 has order $\frac{p-1}{2}$ in $\Z_p^*$.  Then the only two-dimensional representation of the cyclic group $\Z_p$ is $U_{2,\Z_p}$.
\end{theorem}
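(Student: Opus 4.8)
The plan is to translate the statement into matroid language and then settle it combinatorially using the primitivity of the regular action of a group of prime order. By Proposition~\ref{prop:matroid} together with Corollary~\ref{cor:DressAct}, a two-dimensional tropical representation in $\B[\Z_p]$ is precisely a rank-two matroid $M$ on the ground set $\Z_p$ for which the cyclic shift $s$ given by $i\mapsto i+1$---the image of a generator of $\Z_p$ under the regular representation---is a matroid automorphism. The goal is then to show that the only such $M$ is $U_{2,\Z_p}$, i.e.\ that every pair $\{i,j\}$ is a basis.

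First I would observe that $M$ is loopless: the set of loops is $s$-invariant, hence either empty or all of $\Z_p$ by transitivity, and the latter would force rank zero. Next I would use the parallel relation, declaring $i\sim j$ when $\{i,j\}$ is dependent; submodularity of the rank function makes this an equivalence relation on the (loopless) ground set, and any matroid automorphism permutes its classes, so the partition of $\Z_p$ into parallel classes is $s$-invariant---that is, a block system for the regular $\Z_p$-action. The crux is that this action is primitive: a block containing $0$ is closed under addition and hence a subgroup of $\Z_p$, so by primality it is $\{0\}$ or all of $\Z_p$. Concretely, if some pair $\{0,\delta\}$ with $\delta\neq 0$ were dependent, then translating by powers of $s$ and using transitivity of $\sim$ would chain $0\sim\delta\sim 2\delta\sim\cdots$, and since $\gcd(\delta,p)=1$ this collapses all of $\Z_p$ into one parallel class, forcing rank one. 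Hence no pair is dependent and $M=U_{2,\Z_p}$.

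The step I expect to be the real obstacle is not any of the above but reconciling this argument with the theorem's hypotheses, since the reduction uses nothing about the multiplicative order of $2$ modulo $p$ and in fact proves the conclusion for \emph{every} odd prime. This is consistent with the paper's own assessment that the number-theoretic condition is an artifact of a more general proof technique, and with the reported SAGE verification that $p=17$---which satisfies neither (i) nor (ii)---still yields only the uniform matroid. The one genuinely delicate point is therefore foundational rather than arithmetic: one must check that over $\B$ a $\Z_p$-invariant rank-two tropical linear \emph{submodule} corresponds without slack to an $s$-invariant \emph{matroid}, with no extra flexibility hidden in the idempotent exterior algebra (where $e_I^2\neq 0$); granting Proposition~\ref{prop:matroid}, this is exactly what makes the combinatorial argument unconditional. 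If instead one follows a coordinate approach---writing an $s$-fixed $v=\sum v_{\{i,j\}}e_{\{i,j\}}\in\ext^2\B[\Z_p]$, whose entries depend only on the difference $j-i$, and imposing the tropical Pl\"ucker relations---then the surviving non-vanishing question would be governed by the factorization of the cyclotomic polynomial $\Phi_p$ over $\F_2$, equivalently by $\ord_p(2)$, and conditions (i) and (ii) are precisely what keep that factorization coarse enough for such an argument to close.
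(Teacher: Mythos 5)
Your proof is correct, and it takes a genuinely different---and strictly stronger---route than the paper's. The paper works entirely inside $\ext^2\B[\Z_p]$: an invariant Pl\"ucker vector must be a sum of the difference-orbits $f_i$, and applying basis exchange to the basis $\{0,i\}$ with one carefully chosen third element forces the inclusion of $f_{i/2}$ (for $i$ even) or $f_{(p-i)/2}$ (for $i$ odd); the hypothesis on $\ord(2)$ in $\Z_p^*$ enters only through Lemma \ref{lem:MO}, which guarantees that this forcing chain visits every orbit. You instead exploit the structure theory of rank-two matroids: after ruling out loops by transitivity, parallelism is an equivalence relation (your submodularity argument is valid, as is the classical circuit-elimination one), its classes form a shift-invariant partition of $\Z_p$, i.e.\ a block system for the regular action, and primality makes that action primitive---so the partition consists of singletons (giving $U_{2,\Z_p}$) or a single class (rank one, excluded). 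Your concrete chain $0\sim\delta\sim 2\delta\sim\cdots$ is a correct self-contained substitute for citing primitivity. Since no number theory is used, you prove the conclusion unconditionally for \emph{every} odd prime, which is more than the theorem claims; this confirms the paper's own suspicion (the remark about $p=17$ and the SAGE check) that the hypothesis is an artifact of the technique, and it in fact settles the ``prime implies uniform'' direction of the paper's conjecture. What the paper's method buys in exchange is potential generality in rank: basis-exchange forcing along orbits makes sense for representations of any dimension, whereas your argument leans on the loops-plus-parallel-classes description that is special to rank two. Your closing speculation about the factorization of $\Phi_p$ over $\F_2$ is tangential and not needed; the argument preceding it stands on its own.
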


\begin{proof}
We view $\Z_p$ as the set $\{0,\ldots,p-1\}$ and label the basis for $\B[\Z_p]$ accordingly.  Then the $\Z_p$-orbits in $\ext^2\B[\Z_p]$ are
\[f_i := \sum_{\substack{j < k \in \Z_p \\ k-j\in\{i,p-i\}}}e_{jk}\] for $i=1,\ldots,\frac{p-1}{2}$.  We need to show that the only nonzero $\B$-linear combination (i.e., sum) of these $f_i$ which satisfies the tropical Pl\"ucker relations (that is, yields the basis indicator vector for a matroid) is the sum of all the $f_i$.  We will do this by arguing that (1) if we include any particular $f_{a_1}$, then basis exchange requires us to include another, say $f_{a_2}$, and (2) the sequence $f_{a_1},f_{a_2},\ldots$ we obtain this way exhausts all the $f_i$.

Given an index $1 \le i \le \frac{p-1}{2}$, consider the basis $\{0,i\}$ provided by $f_i$.  Because all elements of $\Z_p$ appear in the remaining terms of $f_i$, basis exchange tells us that for any $m\in \Z_p\setminus\{0,i\}$, at least one of the following must also be a basis: $\{0,m\}$, $\{i,m\}$.  We need to consider two cases.  If $i$ is even, then set $m=\frac{i}{2}$.  Since $\{0,\frac{i}{2}\}$ and $\{\frac{i}{2},i\}$ both appear in $f_{\frac{i}{2}}$, basis exchange then requires us to include $f_{\frac{i}{2}}$.  On the other hand, if $i$ is odd then set $m = \frac{p+i}{2}$.  In this latter case we have $i < m < p$ and moreover \[m-i = p-m = \frac{p-1}{2}.\] This means both $\{0,\frac{p+i}{2}\}$ and $\{i,\frac{p+i}{2}\}$ appear in $f_{\frac{p-i}{2}}$, so basis exchange here requires us to include $f_{\frac{p-i}{2}}$.  

Therefore, if we start with any orbit $f_i$, applying this argument inductively gives us a sequence of orbits that basis exchange forces to be included as well.  By Lemma \ref{lem:MO} below, our assumption on $p$ guarantees that this sequence cycles through the entire list of orbits $f_1,\ldots, f_{\frac{p-1}{2}}$, and so the only linear combination of orbits that yields a matroid is the sum of all orbits, which yields the uniform matroid.
\end{proof}

Both the statement and proof of the following lemma are due to Gjergji Zaimi, who kindly answered our MathOverflow question \cite{MO-Collatz}.

\begin{lemma}\label{lem:MO}
Let $p$ be an odd prime and consider the function from $\{1,\ldots,\frac{p-1}{2}\}$ to itself defined by
\[ f(n) = 
  \begin{cases} 
      \frac{n}{2} & n\text{ even}\\
      \frac{p-n}{2} & n\text{ odd}.
   \end{cases}
\]
The orbit of any $n$ under iterations of this function is the entire set $\{1,\ldots,\frac{p-1}{2}\}$ if and only if the following holds: either 2 is a primitive root of $p$, or $p \equiv 7~(\mathrm{mod}~8)$ and 2 has order $\frac{p-1}{2}$ in $\Z_p^*$.
\end{lemma}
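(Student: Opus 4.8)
The plan is to reinterpret $f$ as division by $2$ on the quotient group $\Z_p^\times/\{\pm 1\}$, which converts the orbit question into an elementary computation of the order of $2$ modulo signs. Throughout I identify the domain $\{1,\ldots,\frac{p-1}{2}\}$ with $\Z_p^\times/\{\pm 1\}$ by sending $n$ to the class $\{n,p-n\}$; this is a bijection since exactly one representative of each class $\{a,-a\}$ lies in $\{1,\ldots,\frac{p-1}{2}\}$.

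First I would show that under this identification $f$ is multiplication by $2^{-1}$. Indeed, if $n$ is even then $2^{-1}n\equiv n/2\pmod p$ and $n/2$ already lies in the range, while if $n$ is odd then $2^{-1}n\equiv\frac{n+p}{2}\pmod p$, whose folded representative (via $m\sim p-m$) is $p-\frac{n+p}{2}=\frac{p-n}{2}$; these are exactly the two branches of $f$. Since multiplication by $2^{-1}$ commutes with negation, it descends to a well-defined bijection of $\Z_p^\times/\{\pm 1\}$ equal to $f$. Next, because $\Z_p^\times/\{\pm 1\}$ is a cyclic group of order $\frac{p-1}{2}$ and $f$ is translation by the image $\overline{2^{-1}}$, the $f$-orbit of any $\bar n$ is the coset $\langle\bar 2\rangle\cdot\bar n$ of the subgroup generated by the image $\bar 2$ of $2$. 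Hence every orbit has the same size $\ord(\bar 2)$, and the orbit of each $n$ is the full set if and only if $\bar 2$ generates the quotient, i.e.\ $\ord(\bar 2)=\frac{p-1}{2}$.

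It then remains to express $\ord(\bar 2)=\frac{p-1}{2}$ through the stated arithmetic conditions. Writing $d=\ord_p(2)$, the order of $\bar 2$ is the least $k$ with $2^k\equiv\pm 1$, so $\ord(\bar 2)=d$ when $-1\notin\langle 2\rangle$ and $\ord(\bar 2)=d/2$ when $-1\in\langle 2\rangle$. Thus $\ord(\bar 2)=\frac{p-1}{2}$ holds in exactly two ways: either $-1\in\langle 2\rangle$ and $d=p-1$, which says $2$ is a primitive root (condition (i)); or $-1\notin\langle 2\rangle$ and $d=\frac{p-1}{2}$. In the latter case $\langle 2\rangle$ is the unique index-two subgroup of $\Z_p^\times$, namely the quadratic residues, so $2$ is a quadratic residue (forcing $p\equiv\pm 1\pmod 8$ by the second supplement to quadratic reciprocity) while $-1$ is a non-residue (forcing $p\equiv 3\pmod 4$); the unique class meeting both constraints is $p\equiv 7\pmod 8$, which is condition (ii).

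The main obstacle is precisely this last bookkeeping. One must carefully track when $-1\in\langle 2\rangle$ in order to relate $\ord(\bar 2)$ to $d$, and then combine the fact that $2$ is a quadratic residue with the value of $p\bmod 4$ to single out $p\equiv 7\pmod 8$ rather than merely $p\equiv 3\pmod 4$. In particular the class $p\equiv 3\pmod 8$ must be excluded, which it is because there $2$ is a non-residue and so cannot generate the index-two subgroup required by $d=\frac{p-1}{2}$; similarly $p\equiv 1\pmod 8$ is excluded because there $-1$ is a residue, placing $-1\in\langle 2\rangle$ and dropping the order to $\frac{p-1}{4}$.
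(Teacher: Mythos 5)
Your proof is correct and takes essentially the same route as the paper: both arguments identify $f$ (or its inverse) with multiplication by $2^{\mp 1}$ under the folding $n \sim p-n$, reduce the orbit question to whether $2^r \not\equiv \pm 1 \pmod{p}$ for all $0 < r < \frac{p-1}{2}$ (your condition $\ord(\bar 2) = \frac{p-1}{2}$ in $\Z_p^*/\{\pm 1\}$ is exactly this), and then settle that condition using the two supplements to quadratic reciprocity together with the fact that an element of order $\frac{p-1}{2}$ generates the quadratic residues. Your quotient-group packaging makes the ``all orbits are cosets of $\langle \bar 2 \rangle$, hence equal in size'' point slightly more transparent than the paper's appeal to transitivity of a permutation, but the mathematical content is identical.
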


\begin{proof}
This function is a permutation and the question is whether it acts transitively, i.e., whether the orbit of 1 has size $\frac{p-1}{2}$.  It is equivalent to ask this question of the inverse permutation, which takes the form 
\[1 = 2^0 \mapsto \pm 2^1 \mapsto \pm 2^2 \mapsto \cdots \mapsto \pm 2^{\frac{p-1}{2}} = 1\text{ in }\Z_p^*\]
where the signs are uniquely determined by the condition that $\pm 2^r \in \{1,\ldots,\frac{p-1}{2}\}$ and the equality $2^{\frac{p-1}{2}} = 1$ comes from the fact that $\Z_p^*$ has order $p-1$ so $2^{\frac{p-1}{2}} = \pm 1$ in this group.  Thus our question is whether $2^r \not\equiv \pm 1 \text{ mod } p$ for all $0 < r < \frac{p-1}{2}$.

If 2 is a primitive root of $p$ then the above condition clearly holds: since 2 has order $p-1$ we know $2^{\frac{p-1}{2}} = -1$ and so no smaller power of 2 is $\pm 1$.  So assume $p \equiv 7~(\text{ mod }8)$ and $\ord(2) = \frac{p-1}{2}$.  The first condition says, by the two supplements to quadratic reciprocity, that 2 is a quadratic residue and $-1$ is not a quadratic residue.  Since 2 is a quadratic residue, all powers of 2 are quadratic residues, so -1 is not a power of two.  Since 2 has order $\frac{p-1}{2}$, we thus have $2^r \ne \pm 1$ for all $0 < r < \frac{p-1}{2}$, as desired.

Conversely, suppose $2^r \neq \pm 1$ in $\Z_p^*$ for all $0 < r < \frac{p-1}{2}$.  If $2^{\frac{p-1}{2}} = -1$, then $\ord(2) > \frac{p-1}{2}$, but $\ord(2)$ divides $|\Z_p^*|=p-1$ so it must equal $p-1$, i.e., 2 is a primitive root of $p$.  So let us assume $2^{\frac{p-1}{2}} = 1$.  Together with our hypothesis this implies that $-1$ is not a power of 2 and that $\ord(2) = \frac{p-1}{2}$.  The latter implies that 2 is the square of a primitive root, so then by Euler's criterion the powers of 2 cover all quadratic residues.  But the former then tells us $-1$ is not a quadratic residue.  Thus $\left(\frac{2}{p}\right) = 1$ and $\left(\frac{-1}{p}\right) = -1$, so by the two supplements to quadratic reciprocity we have $p \equiv 7~(\text{ mod }8)$.
\end{proof}

\begin{remark}
In the proof of Theorem \ref{thm:prime2dim} we applied the matroid basis exchange axiom only to a single term in each orbit (our particular basis $\{0,i\}$, and we only used our particular choice of $m$, in the notation of the proof).  These values were chosen carefully so that the two possible bases required by basis exchange are then in the \emph{same} orbit, which means we must have that particular orbit in our sum.  The advantage of doing this is that we don't need to break into cases, but the disadvantage is that in the end we are only able to guarantee we get all orbits this way when the numerical condition in Lemma \ref{lem:MO} holds.  We know that this numerical hypothesis is not actually necessary, as this hypothesis does not hold for $p=17$ and yet we checked using SAGE and found that the only two-dimensional representation for $\Z_{17}$ is the uniform matroid.  Thus, it is possible to weaken the hypothesis for this theorem but doing so would require applying basis exchange to other choices of $m$ and hence would require a complicated web of cases.  
\end{remark}

Based on the $p=17$ example alluded to above, it seems plausible to guess that Theorem \ref{thm:prime2dim} actually holds for all primes.  Since our $\Z_4$ and $\Z_6$ examples worked out in \S\ref{sec:ord4} and \S\ref{sec:ord6cyclic} suggest that non-uniform two-dimensional representations occur for non-prime order cyclic groups, we go further and venture the following:

\begin{conjecture}
There is only one two-dimensional subrepresentation of $\B[\Z_n]$ if and only if $n$ is prime.
\end{conjecture}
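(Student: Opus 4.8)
The plan is to classify \emph{all} two-dimensional subrepresentations of $\B[\Z_n]$ at once, which yields not merely the conjectured dichotomy but an exact count. By Corollary \ref{cor:DressAct} and Proposition \ref{prop:matroid}, a two-dimensional subrepresentation is exactly a rank-two matroid $M$ on the ground set $\Z_n$ that is invariant under the translation action of $\Z_n$ on itself. Since that action is transitive, either every element is a loop---impossible in rank two---or none is, so $M$ is loopless.

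The crux is a standard structural fact: a loopless rank-two matroid is the same thing as a partition of its ground set into its rank-one flats (parallel classes), with a pair $\{a,b\}$ being a basis precisely when $a$ and $b$ lie in distinct classes, and conversely every partition into at least two blocks arises this way. (Transitivity of the parallel relation is the observation that the rank-one flat $\mathrm{cl}(\{b\})$ of a non-loop $b$ is unique, so $a\parallel b$ and $b\parallel c$ force $a,c\in\mathrm{cl}(\{b\})$.) Under this dictionary, $M$ is $\Z_n$-invariant if and only if the underlying partition is translation-invariant; and a translation-invariant partition of $\Z_n$ is necessarily the coset partition of a subgroup, because the block $N$ containing $0$ satisfies $N+h=N$ for every $h\in N$ and hence is its own stabilizer, i.e.\ a subgroup.

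This would set up a bijection between two-dimensional subrepresentations of $\B[\Z_n]$ and \emph{proper} subgroups $N<\Z_n$: properness is exactly the requirement of at least two cosets (rank two rather than one), the trivial subgroup $N=\{0\}$ gives the uniform matroid $U_{2,\Z_n}$, and every nontrivial $N$ gives a non-uniform matroid whose parallel classes are its cosets. As the subgroups of $\Z_n$ correspond to the divisors of $n$, the number of two-dimensional subrepresentations is therefore $\tau(n)-1$, one less than the number of divisors. This count equals $1$ exactly when $\tau(n)=2$, that is, when $n$ is prime, proving the conjecture; it also settles Theorem \ref{thm:prime2dim} for \emph{all} odd primes (only $N=\{0\}$ is proper) and recovers the tallies $\tau(4)-1=2$ and $\tau(6)-1=3$ for $\Z_4$ and $\Z_6$.

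I do not anticipate a real obstacle: the number-theoretic difficulty in Theorem \ref{thm:prime2dim} is an artifact of reasoning orbit-by-orbit with the difference classes $f_i$, and it evaporates once one works with parallel classes and subgroups instead. The only point needing genuine (but routine) care is the verification in the second paragraph---that every translation-invariant loopless rank-two matroid is a coset partition---rather than merely confirming this on small examples.
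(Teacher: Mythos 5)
Your proof looks correct, and it does something the paper itself does not: this statement is left as a \emph{conjecture} there, with no proof at all. The paper's closest result, Theorem \ref{thm:prime2dim}, covers only primes satisfying a condition on the multiplicative order of $2$ (proved by applying basis exchange orbit-by-orbit to the difference classes $f_i$ and invoking the number-theoretic Lemma \ref{lem:MO}), supplemented by a SAGE check for $p=17$ and by the explicit $\Z_4$, $\Z_6$ computations in \S\ref{sec:ord4} and \S\ref{sec:ord6cyclic} for the non-prime direction. Your route is genuinely different and strictly stronger: identifying invariant loopless rank-two matroids with translation-invariant partitions into parallel classes, and translation-invariant partitions of $\Z_n$ with coset partitions of subgroups, is elementary and unconditional. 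Each step checks out---transitivity of the action rules out loops; parallelism is an equivalence relation whose classes determine and are determined by the rank-two matroid, so invariance of the matroid is invariance of the partition; and the block through $0$ is closed under translation by its own elements, hence (by finiteness) a subgroup, forcing every block to be a coset. The resulting count $\tau(n)-1$ matches every data point in the paper: $2$ for $\Z_4$, $3$ for $\Z_6$, $1$ for $\Z_5$ and for $\Z_{17}$. Moreover, your argument never uses commutativity, so it actually shows that for \emph{any} finite group $G$ the two-dimensional subrepresentations of $\B[G]$ biject with the proper subgroups of $G$---which correctly predicts the paper's tallies of $3+1$ for $\Z_2\times\Z_2$ and $4+1$ for $\Sigma_3$---and it eliminates the hypothesis of Theorem \ref{thm:prime2dim} entirely. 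What your classification does not touch, and where the paper's heavier machinery (Lemma \ref{lem:MO}, the adaptation of \cite{Vandermonde}) is really aimed, is the \emph{realizability} question, i.e., which of these invariant matroids arise as tropicalizations of subrepresentations of $\C[\Z_n]$; your argument is purely combinatorial and says nothing about that.
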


We do not know much about the higher-dimensional tropical representations of a cyclic group.  However, by focusing on \emph{realizable} representations we are able to make some progress.  First, note that the $n$ irreducible representations of the cyclic group $\Z_n$ are all given by powers of a primitive $n$-th root of unity, so that any $d$-dimensional subrepresentation of $\C[\Z_n]$ is given by a matrix of the form
\[\left(\begin{array}{ccccc}1 & \zeta^{i_1} & \zeta^{2i_1} & \cdots & \zeta^{(n-1)i_1} \\ \vdots & \vdots & \vdots & \ddots \\ 1 & \zeta^{i_d} & \zeta^{2i_d} & \cdots & \zeta^{(n-1)i_d}\end{array}\right)\]
for $I = \{i_1,\ldots,i_d\} \in \binom{\Z_n}{d}$.  The tropicalization of this representation is the matroid whose bases are the subsets $J\in\binom{\Z_n}{d}$ for which the minor in this matrix with columns $J$ is nonzero.  Since this does not depend on the order of the indices within $I$, we shall simply call this the \emph{$I$-th tropicalized representation} in $\B[\Z_n]$.

\begin{theorem}
For a cyclic group $\Z_p$ of prime order, the uniform subrepresentation in each dimension is realizable and these are the only realizable subrepresentations.
\end{theorem}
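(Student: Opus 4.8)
The plan is to reduce the entire statement to Chebotarev's theorem on the minors of the discrete Fourier transform matrix. Recall that when $p$ is prime and $\zeta$ is a primitive $p$-th root of unity, Chebotarev's theorem asserts that \emph{every} square submatrix of the full matrix $(\zeta^{jk})_{j,k\in\Z_p}$ has nonzero determinant. By the discussion immediately preceding the theorem, every $d$-dimensional subrepresentation of $\C[\Z_p]$ is a direct sum of $d$ of the one-dimensional irreducibles, hence is recorded by a choice $I=\{i_1,\ldots,i_d\}\in\binom{\Z_p}{d}$ of characters, and its tropicalization is the matroid whose bases are exactly those $J\in\binom{\Z_p}{d}$ for which the minor on columns $J$ of the associated matrix is nonzero. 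So I would begin by fixing such an $I$ and analyzing these minors.

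The key step is to recognize each such minor as a submatrix of the DFT matrix. With $I$ indexing the rows and $J=\{j_1,\ldots,j_d\}$ indexing the columns, the relevant minor is $\det\bigl(\zeta^{\,j i}\bigr)_{i\in I,\,j\in J}$, which is precisely the square submatrix of $(\zeta^{jk})_{j,k\in\Z_p}$ with rows $I$ and columns $J$. Chebotarev's theorem therefore guarantees that this determinant is nonzero for every choice of $J$. Consequently, for any fixed $I$, every $d$-subset of $\Z_p$ is a basis, and so the tropicalization of the corresponding subrepresentation is the uniform matroid $U_{d,\Z_p}$.

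This single application yields both halves of the theorem at once. On the one hand, choosing any $I\in\binom{\Z_p}{d}$ exhibits $U_{d,\Z_p}$ as the tropicalization of a genuine subrepresentation, establishing realizability for every $1\le d\le p$. On the other hand, since the tropicalization of an \emph{arbitrary} $d$-dimensional subrepresentation is $U_{d,\Z_p}$ regardless of which characters are chosen, the uniform matroids are the only matroids that can arise as tropicalizations, and hence are the only realizable tropical representations.

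The one genuine obstacle is Chebotarev's theorem itself, which is not elementary; I would simply invoke it (via the modern proof in \cite{Vandermonde}) rather than reprove it, which is exactly why item (4) of Theorem~B was advertised as following almost at once from Chebotarev. The only routine point to record is that the decomposition of $\C[\Z_p]$ into its $p$ distinct one-dimensional characters is multiplicity-free, so that $d$-dimensional subrepresentations correspond bijectively to $d$-subsets $I$; this is standard abelian character theory and presents no difficulty.
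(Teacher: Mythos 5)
Your proof is correct and is essentially identical to the paper's: both reduce the theorem to Chebotarev's result that every square submatrix of the prime-order DFT matrix $(\zeta^{jk})_{j,k\in\Z_p}$ has nonzero determinant, so that every $d$-dimensional subrepresentation (a choice of $d$ distinct characters $I$) tropicalizes to $U_{d,\Z_p}$, giving realizability and uniqueness simultaneously. No gaps to report.
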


\begin{remark}
The second of these assertions can fail for cyclic groups of non-prime order; indeed, we show in \S\ref{sec:ord4} and \S\ref{sec:ord6cyclic} that there are non-uniform realizable representations of both $\Z_4$ and $\Z_6$ in dimension two.
\end{remark}

\begin{proof}
Consider the above matrix of primitive $n$-th roots of unity in the case $d=n$, so that we have a square matrix. Chebotarev's theorem on roots of unity is that if $n$ is prime, then all minors of this matrix are nonzero---so by selecting any $I$-rows we see that the $I$-th tropicalized representation is uniform.
\end{proof}

By slightly modifying the proof of \cite[Theorem 6]{Vandermonde} we are able to make some, though far less comprehensive, progress in the case of prime power order.  

\begin{theorem}
For $I,J\in \binom{\Z_{p^r}}{d}$, if the $J$-th Pl\"ucker coordinate of the $I$-th representation of $\Z_{p^d}$ is zero, then at least two elements of $I$ are equal mod $p$ and the same holds for $J$.
\end{theorem}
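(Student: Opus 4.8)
The plan is to prove the logically equivalent statement obtained by contraposition together with a symmetry. Recall from the discussion preceding the theorem that the $J$-th Pl\"ucker coordinate of the $I$-th representation is the minor $M_{I,J} = \det(\zeta^{i_a j_b})_{1\le a,b\le d}$, where $\zeta$ is a primitive $p^r$-th root of unity, $I=\{i_1,\dots,i_d\}$, and $J=\{j_1,\dots,j_d\}$. Transposing this matrix interchanges the roles of $I$ and $J$ without changing the determinant, so it suffices to prove the single implication: \emph{if the elements of $I$ are pairwise distinct modulo $p$, then $M_{I,J}\neq 0$}. Applying this once as stated and once via the transpose, and then contraposing, yields exactly that $M_{I,J}=0$ forces a repeated residue mod $p$ in each of $I$ and $J$.

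So I assume $i_1,\dots,i_d$ are pairwise distinct mod $p$; note this forces $d\le p$. Suppose for contradiction that $M_{I,J}=0$, so the rows are linearly dependent over $\Q(\zeta)$: there are scalars $c_1,\dots,c_d$, not all zero, with $\sum_a c_a\zeta^{i_a j_b}=0$ for every $b$. Working in the localization at the prime $\mathfrak p\subseteq\Z[\zeta]$ over $p$ (a DVR) and scaling, I may take $c_a\in\Z[\zeta]$ with not all $c_a\in\mathfrak p$. Setting $Y=\zeta^{j_b}$, the relations say the polynomial $P(Y):=\sum_a c_a Y^{i_a}\in\Z[\zeta][Y]$ vanishes at the $d$ distinct $p^r$-th roots of unity $\zeta^{j_1},\dots,\zeta^{j_d}$. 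Since $\prod_b(Y-\zeta^{j_b})$ is monic, monic division followed by the observation that the degree $<d$ remainder has $d$ distinct roots shows that $\prod_b(Y-\zeta^{j_b})$ divides $P$ in $\Z[\zeta][Y]$.

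Now I reduce modulo $\mathfrak p$. Since $p$ is totally ramified in $\Z[\zeta]$ with residue field $\F_p$ and $\zeta\equiv 1\pmod{\mathfrak p}$, each factor $Y-\zeta^{j_b}$ reduces to $Y-1$, so $(Y-1)^d$ divides the reduction $\bar P(Y)=\sum_a\bar c_a Y^{i_a}\in\F_p[Y]$. Substituting $Y=Z+1$ and expanding by the binomial theorem gives $\bar P(Z+1)=\sum_{m\ge 0}\big(\sum_a\bar c_a\binom{i_a}{m}\big)Z^m$, and divisibility by $(Y-1)^d=Z^d$ is exactly the vanishing of the first $d$ coefficients: $\sum_a\bar c_a\binom{i_a}{m}=0$ in $\F_p$ for $0\le m\le d-1$. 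This homogeneous system in the $\bar c_a$ has coefficient matrix $\big(\binom{i_a}{m}\big)_{0\le m\le d-1,\,1\le a\le d}$, whose determinant equals $\prod_{a<a'}(i_{a'}-i_a)\big/\prod_{m=0}^{d-1}m!$ (obtained by row-reducing the binomials against the ordinary Vandermonde matrix). Because the $i_a$ are distinct mod $p$ the numerator is prime to $p$, and because $d\le p$ each factorial $m!$ with $m<d$ is prime to $p$; hence this determinant is a unit in $\F_p$. Therefore $\bar c_a=0$ for all $a$, contradicting that not all $c_a$ lie in $\mathfrak p$, and so $M_{I,J}\neq 0$.

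I expect the main obstacle to be the passage to characteristic $p$ in the third paragraph: one must identify the residue field of $\mathfrak p$ as $\F_p$, observe that every $p^r$-th root of unity reduces to $1$ so that the degree-$d$ factor $\prod_b(Y-\zeta^{j_b})$ descends to $(Y-1)^d$, and only then read off the binomial linear system (the substitution $Y=Z+1$ is what lets me avoid the subtleties of differentiating in characteristic $p$). The hypothesis that $I$ is distinct mod $p$ enters solely to make the determinant $\prod_{a<a'}(i_{a'}-i_a)\big/\prod_{m<d}m!$ a unit mod $p$, which is also where $d\le p$ is used to keep the factorials invertible; after that the conclusion is immediate. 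Notably the argument constrains $J$ only through the distinctness of its elements, which is precisely what allows the transpose symmetry to upgrade the one-sided statement to the full ``and the same holds for $J$.''
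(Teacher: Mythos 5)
Your proof is correct, but it takes a genuinely different route from the paper's. The paper adapts \cite[Theorem 6]{Vandermonde} directly: writing $V_{\vec{a}}(\vec{x})$ for the generalized Vandermonde determinant and $P_{\vec{a}} := V_{\vec{a}}/V_{\vec{d}} \in \Z[\vec{x}]$, it factors the Pl\"ucker coordinate as $V_I(\zeta^J) = V_{\vec{d}}(\zeta^J)\,P_I(\zeta^J)$, so vanishing forces $P_I(\zeta^J)=0$; hence the cyclotomic polynomial $\Phi_{p^r}(x)$ divides the one-variable polynomial $P_I(x^J)$, and evaluating at $x=1$ with $\Phi_{p^r}(1)=p$ gives $p \mid P_I(\vec{1})$, which via the identity $P_I(\vec{1})\,V_{\vec{d}}(\vec{d}) = V_{\vec{d}}(I)$ cited from \cite{Vandermonde} yields $p \mid \prod_{a<a'}(i_{a'}-i_a)$; the symmetry $V_I(\zeta^J)=V_J(\zeta^I)$ then handles $J$, exactly as your transpose step does. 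You instead contrapose and work locally at the totally ramified prime $\mathfrak{p}=(1-\zeta)$: a vanishing minor gives a row dependence normalized so that not all coefficients lie in $\mathfrak{p}$, the sparse polynomial $P(Y)=\sum_a c_a Y^{i_a}$ is divisible by $\prod_b (Y-\zeta^{j_b})$, reduction modulo $\mathfrak{p}$ turns this into $(Y-1)^d \mid \bar{P}$ in $\F_p[Y]$, and the invertibility of the binomial-coefficient matrix forces all $\bar{c}_a=0$, a contradiction. The two arguments pivot on the same arithmetic in different guises: your congruence $\zeta \equiv 1 \pmod{\mathfrak{p}}$ is the total-ramification fact the paper invokes as $\Phi_{p^r}(1)=p$, and your determinant $\prod_{a<a'}(i_{a'}-i_a)\big/\prod_{m<d}m!$ is precisely the paper's integer $P_I(\vec{1}) = V_{\vec{d}}(I)/V_{\vec{d}}(\vec{d})$ --- the paper shows this integer is divisible by $p$ when the minor vanishes, while you show it is a unit in $\F_p$ under the contrapositive hypothesis. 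What each buys: the paper's proof is shorter because it imports the Schur-polynomial identities of \cite{Vandermonde} wholesale and visibly parallels the published proof of Chebotarev's theorem; yours is self-contained (no Schur polynomials, no external identities beyond the classical Vandermonde evaluation), and it isolates exactly where each hypothesis enters --- distinctness of $I$ mod $p$ for the numerator, $d \le p$ for the factorials, and the order-of-vanishing computation at $Y=1$ in characteristic $p$ replacing the global divisibility by $\Phi_{p^r}$.
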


\begin{proof}
Following \cite{Vandermonde}, for $\vec{a} = (a_1,\ldots,a_d)$ a $d$-tuple of distinct non-negative integers and $\vec{x} = (x_1,\ldots,x_d)$ a $d$-tuple of indeterminates, let $V_{\vec{a}}(\vec{x})$ denote the determinant of the matrix with $(i,j)$-entry $x_i^{a_j}$.  Write $\vec{d} := (0,1,\ldots,d-1)$.  Then 
\[V_{\vec{d}}(\vec{x}) = \prod_{1 \le i < j \le d}(x_j-x_i)\]
is the usual Vandermonde determinant.  As in \cite{Vandermonde}, let us write \[P_{\vec{a}}(\vec{x}) := \frac{V_{\vec{a}}(\vec{x})}{V_{\vec{d}}(\vec{x})} \in \Z[x_1,\ldots,x_d].\]
Let us also write $x^{\vec{a}} := (x^{a_1},x^{a_2},\ldots,x^{a_n})$ where $x$ is a single indeterminate.  

Fix an order on the subsets $I,J\in\binom{\Z_{p^r}}{d}$.  Then, up to sign (which won't affect anything in this discussion), the $J$-th Pl\"ucker coordinate of the $I$-th representation of $\Z_{p^r}$ is $V_I(\zeta^J) = V_J(\zeta^I)$.  Since the powers $\zeta^J = (\zeta^{j_1},\ldots,\zeta^{j_d})$ are distinct, the Vandermonde determinant $V_{\vec{d}}(\zeta^J)$ is nonzero.  Thus, our hypothesis that the Pl\"ucker coordinate $V_I(\zeta^J) = V_{\vec{d}}(\zeta^J)P_I(\zeta^J)$ is zero implies $P_I(\zeta^J)=0$.  This implies that the minimal polynomial of $\zeta$, namely the cyclotomic polynomial $\Phi_{p^r}(x)$, divides the polynomial $P_I(x^J)$.  This next step is where we use that our cyclic group has prime power order: it is a standard and elementary fact in algebraic number theory (proven, for instance, using Mobius inversion) that $\Phi_{p^r}(1) = p$.  So $p$ divides $P_I(\vec{1})$.  It is proven in \cite{Vandermonde} that $P_I(\vec{1})V_{\vec{d}}(\vec{d}) = V_{\vec{d}}(I)$, so we see that $p$ divides $V_{\vec{d}}(I)$---and since this latter quantity is an ordinary Vandermonde determinant, we can conclude that at least two indices in $I$ must coincide mod $p$.  Reversing the role of $I$ and $J$ in this proof allows us to conclude the same about $J$.
\end{proof}


\subsection{Groups of small order}

Here we work out examples of the theory developed in this paper---specifically concerning subrepresentations of the regular representation---by going through some groups of small order.  First, recall that (1) the tropical linear space associated to the uniform matroid of any rank $\le |G|$ is a subrepresentation (Proposition \ref{prop:unif}); (2) for rank two and $G$ abelian, this subrepresentation is realizable if and only if $G$ is cyclic (Theorem \ref{thm:cyclic}); (3) the only dimension one and codimension one subrepresentations, for any finite $G$, are the tropical linear spaces associated to uniform matroids, both of which are always realizable (Proposition \ref{prop:dimone}).

\subsubsection{Order 4}\label{sec:ord4}

There are only two groups to consider, $\Z_4$ and $\Z_2\times \Z_2$, and there is only one interesting dimension to consider, namely dimension two.  Since $\Z_4$ is cyclic we already know that the tropical linear space associated to the uniform matroid $U_{2,\Z_4}$ is realizable, but are there any non-uniform subrepresentations?  Let us write the basis for $\B[\Z_4]$ as $e_0,e_1,e_2,e_3$ with the obvious relation to the group $\Z_4$.  A subrepresentation is a fixed point for the induced $\Z_4$-action on $\ext^2 \B[\Z_4]$ that also satisfies the tropical Pl\"ucker relation (Corollary \ref{cor:DressAct}).  A fixed point must be a sum of orbits, and there are only two orbits in $\ext^2 \B[\Z_4]$, namely \[e_{02}+e_{13}\text{ and }e_{01}+e_{12}+e_{23}+e_{03}.\]  The former of these is not a matroid, the latter is, and their sum corresponds to the uniform matroid.  Thus there is exactly one non-uniform subrepresentation; it is self-dual and spanned by $e_0+e_2$, $e_1+e_3$, and $e_i+e_j+e_k$ for all $\{i,j,k\}\in\binom{\Z_4}{3}$.  

This non-uniform subrepresentation is also realizable---for instance, it is realized by the direct sum of the trivial representation and the sign representation.  In fact, one can check that among the six subrepresentations in $\C[\Z_4]$ of dimension two (all such are direct sums of two of the four one-dimensional representations), two of them tropicalize to our non-uniform subrepresentation and four of them tropicalize to the uniform subrepresentation.  

\begin{remark}\label{rem:indecomp}
It is worth noting that this non-uniform two-dimensional subrepresentation is ``indecomposable'' in the sense that it is not the stable sum of two subrepresentations of lower dimension.  Indeed, stable sum corresponds to matroid union, and the only one-dimensional subrepresentation corresponds to $U_{1,\Z_4}$ and the matroid union of that with itself is $U_{2,\Z_4}$.  In fact, this shows that for any finite group $G$, any non-uniform two-dimensional subrepresentation of $\B[G]$ is indecomposable in this sense (higher dimension is more subtle/interesting, since for instance a four-dimensional representation might decompose as a stable sum of two indecomposable two-dimensional representations).  A word of caution, however: this notion of indecomposability is not intrinsic, it is a property of the embedding in the regular representation.
\end{remark}

Let us turn now to the Klein four group, and let us write the basis for $\B[\Z_2\times\Z_2]$ as $e_{(0,0)},e_{(1,0)},e_{(0,1)},e_{(1,1)}$.  Again we need only look in dimension two, and we do so by first writing down all the orbits in $\ext^2\B[\Z_2\times\Z_2]$ (here to avoid confusion we do not abbreviate the wedge notation): 
\begin{eqnarray*}
e_{(0,0)}\wedge e_{(1,0)} + e_{(0,1)}\wedge e_{(1,1)},\\
e_{(0,0)}\wedge e_{(0,1)} + e_{(1,0)}\wedge e_{(1,1)},\\
e_{(0,0)}\wedge e_{(1,1)} + e_{(1,0)}\wedge e_{(0,1)}.
\end{eqnarray*}
None of these orbits forms a matroid, but the sum of any two of them does---and of course the sum of all three forms the uniform matroid which we already know corresponds to a non-realizable subrepresentation since $\Z_2\times \Z_2$ is not cyclic---so there are exactly three non-uniform subrepresentations (all three of which are indecomposable as discussed in Remark \ref{rem:indecomp}).  One readily checks, for instance by explicitly writing down Pl\"ucker vectors, that all three of these are realizable.

\subsubsection{Order 5}

Here we only have the cyclic group $\Z_5$, and the dimensions to explore are two and three---but since the complement of a subrepresentation is a subrepresentation (Theorem \ref{thm:complement}), in order to classify all subrepresentations it suffices to consider dimension two.  By Theorem \ref{thm:prime2dim} the only two-dimensional representation is $U_{2,\Z_5}$, and since $\Z_5$ is cyclic we know by Theorem \ref{thm:cyclic} that this is realizable.

\subsubsection{Order 6 --- cyclic group}\label{sec:ord6cyclic}

For $\Z_6$, the orbits in $\ext^2\B[\Z_6]$ are
\begin{eqnarray*}
f_1 := e_{01} + e_{12} + e_{23} + e_{34} + e_{45} + e_{05},\\
f_2 := e_{02} + e_{13} + e_{24} + e_{35} + e_{04} + e_{15},\\
f_3 := e_{03} + e_{14} + e_{25}.
\end{eqnarray*}
None of these are matroids, but $f_1+f_2$ and $f_1+f_3$ are (as is the uniform matroid $f_1+f_2+f_3$, of course).   If $\zeta$ denotes a primitive sixth root of unity in $\C$, then any two-dimensional subrepresentation of $\C[\Z_6]$ is given by a matrix of the form
\[\left(\begin{array}{cccccc}1 & \zeta^j & \zeta^{2j} & \zeta^{3j} & \zeta^{4j} & \zeta^{5j} \\1 & \zeta^k & \zeta^{2k} & \zeta^{3k} & \zeta^{4k} & \zeta^{5k}\end{array}\right)\]
for $0 \le j \ne k \le 5$.  For $0 \le a \ne b \le 5$, the $(a,b)$-minor is then \[\zeta^{aj+bk} - \zeta^{ak+bj} = \zeta^{ak+bj}(\zeta^{(a-b)(j-k)} - 1).\]  Thus $\{a,b\}$ is a basis for the associated matroid/tropicalization if and only if
\[(a-b)(j-k) \not\equiv 0~(\text{mod }6).\]  If $k-j \equiv \pm 2$ then we get the matroid $f_1+f_2$, if $k-j\equiv 3$ then we get $f_1+f_3$, and in the remaining cases we get the uniform two-dimensional subrepresentation.

The $\Z_6$-orbits in $\ext^3\B[\Z_6]$ are
\begin{eqnarray*}
g_1 := e_{012} + e_{123} + e_{234} + e_{345} + e_{045} + e_{015},\\
g_2 := e_{013} + e_{124} + e_{235} + e_{034} + e_{145} + e_{025},\\
g_3 := e_{014} + e_{125} + e_{023} + e_{134} + e_{245} + e_{035},\\
g_4 := e_{024} + e_{135}.
\end{eqnarray*}
None of these are matroids, but we get the two matroids $g_1+g_4$ and $g_1+g_2+g_3$, in addition to the uniform matroid $U_{3,\Z_6}$ given by $\sum g_i$.  If interested, one could study realizability of these by looking at $3\times 3$ minors of sixth roots of unity; we shall skip this and instead turn to indecomposability in the sense of Remark \ref{rem:indecomp}.  The matroid union of $U_{1,\Z_6}$ with itself three times yields $U_{3,\Z_6}$, but now we must also consider the matroid union of $U_{1,\Z_6}$ with the rank two matroids $f_1+f_2$ and $f_1+f_3$ above (in general, matroid union with a uniform matroid is called matroid \emph{elongation} \cite[Proposition 5.3.1]{GG3}).  One easily checks that 
\[(f_1+f_2)\wedge \left(\sum e_i\right) = \sum g_i \text{ and }(f_1+f_3)\wedge \left(\sum e_i\right) = g_1+g_2+g_3\]
so we have exactly one indecomposable dimension three subrepresentation, namely $g_1+g_4$.

\subsubsection{Order 6 --- symmetric group}\label{sec:ord6symmetric}

We now turn to the other group of order six, the symmetric group on three letters.  We shall view this as the dihedral group 
\[D_3 = \{\rho,\sigma~|~\rho^3=\sigma^2=(\sigma\rho)^2=1\}\] and label the basis for $\B[D_3]$ as follows:
\[e_1,~e_{\sigma},~e_{\rho},~e_{\sigma\rho},~e_{\rho^2},~e_{\sigma\rho^2}.\]
The orbits in $\ext^2\B[D_3]$ are
\begin{eqnarray*}
f_1 := e_1\wedge e_\sigma + e_{\rho}\wedge e_{\sigma\rho^2} + e_{\rho^2}\wedge e_{\sigma\rho}\\
f_2 := e_1\wedge e_{\sigma\rho} + e_{\rho}\wedge e_{\sigma} + e_{\rho^2}\wedge e_{\sigma\rho^2}\\
f_3 := e_1\wedge e_{\sigma\rho^2} + e_{\rho}\wedge e_{\sigma\rho} + e_{\rho^2}\wedge e_{\sigma}\\
f_4 := e_1\wedge e_\rho + e_{\rho}\wedge e_{\rho^2} + e_{1}\wedge e_{\rho^2} + e_\sigma\wedge e_{\sigma\rho} + e_{\sigma\rho}\wedge e_{\sigma\rho^2} + e_{\sigma}\wedge e_{\sigma\rho^2}\\
\end{eqnarray*}
The linear combinations of these $f_i$ which form matroids are precisely the sums of any three of them, as well as the uniform matroid given by the sum of all four.

To determine realizability of these subrepresentations, we need to classify the two-dimensional subrepresentations of $\C[D_3]$.  Doing so up to isomorphism is a well-known staple of classical invariant theory, but we need to explicitly coordinatize everything.  The main subtlety is that in the isotypic module decomposition \[\C[D_3] \cong V_{\text{triv}}\oplus V_{\text{sign}}\oplus V_{\text{std}}^{\oplus 2}\] there are many different possible decompositions of the latter factor into two copies of the two-dimensional standard representation.  Since the one-dimensional representations are unique, let us start with those: the only reducible two-dimensional representation in $\C[D_3]$ is $V_{\text{triv}}\oplus V_{\text{sign}}$, which can be presented by the matrix
\[\left(\begin{array}{cccccc}1 & 1 & 1 & 1 & 1 & 1 \\1 & -1 & 1 & -1 & 1 & -1\end{array}\right)\]
and hence has associated tropicalization/matroid $f_1+f_2+f_3$.  

As for irreducible two-dimensional representations, they are all isomorphic to the standard representation and are all spanned by a pair of vectors $v,~\sigma\cdot v$ where $v$ is an eigenvector for $\rho$ with eigenvalue $\omega$ a primitive cube root of unity (it follows that $\sigma\cdot v$ is then an eigenvector for $\rho$ with eigenvalue $\omega^2$); see \cite[\S1.3]{Fulton-Harris}.  By parameterizing the eigenspace we see that any copy of the standard representation is given by
\[\left(\begin{array}{cccccc}z_1 & z_2 & \omega^2 z_1 & \omega z_2 & \omega z_1 & \omega^2 z_2 \\z_2 & z_1 & \omega z_2 & \omega^2 z_1 & \omega^2 z_2 & \omega z_1\end{array}\right)\]
for $z_1,z_2\in \C$ with $(z_1,z_2)\ne (0,0)$.  One can see directly from the Pl\"ucker vector of $2\times 2$ minors that the associated matroid is:
\begin{itemize}
\item $U_{2,D_3}$ if $z_1,z_2$ are sufficiently generic;
\item $f_2+f_3+f_4$ if $z_1^2 = z_2^2$;
\item $f_1+f_2+f_3$ if $z_1z_2=0$;
\item $f_1+f_2+f_4$ if $z_1^2 = \omega z_2^2$;
\item $f_1+f_3+f_4$ if $z_1^2 = \omega^2 z_2^2$.
\end{itemize}
In particular, notice that (1) the uniform two-dimensional subrepresentation is realizable, even though $D_3$ is not cyclic (contrasting the situation for abelian groups \ref{thm:cyclic})---in fact, \emph{every} two-dimensional subrepresentation is realizable for this group---and (2) the subrepresentation with tropical Pl\"ucker vector $f_1+f_2+f_3$ is realized by both a reducible subrepresentation, $V_{\text{triv}}\oplus V_{\text{sign}}$, \emph{and} by an irreducible one, namely a copy of $V_{\text{std}} \subseteq \C[D_3]$ for a sufficiently generic splitting of the isotypic module $V_{\text{std}}^{\oplus 2}$.  

We leave the study of dimension three subrepresentations of $\B[D_3]$ to the enthusiastic reader.

\subsection{Character tables}

We would like to define the character of a tropical subrepresentation $T \subseteq V$.  However, since tropical linear spaces generally are not free modules, we cannot simply restrict an automorphism of $V$ to $T$ and obtain a matrix from which to compute the trace.  Here we explore a potential remedy to this; for simplicity, we stick with the Boolean setting $V\cong \B^n$, but one could presumably extend this to arbitrary idempotent semifields if interested.  The key observation is that the matrix of a linear transformation of free modules records where a basis is sent, and the trace is independent of the choice of basis---and while a tropical linear space does not have a basis, it does admit a canonical generating set, namely the cocircuit vectors.

Fix a linear representation $G \rightarrow \GL(V)$ over $\B$ and a tropical subrepresentation $T \subseteq V$.  By Proposition \ref{prop:matroid}, each $g\in G$ yields an automorphism of the matroid $M(T)$ associated to $T$, so by Lemma \ref{lem:MatAut} we see that $G$ acts on the set of cocircuits of $M(T)$.

\begin{definition}
The \emph{character} $\chi_T : G \rightarrow \B$ of $T \subseteq V$ is the function sending $g\in G$ to the trace (sum of diagonal entries) of the permutation matrix coming from the action on the cocircuits of $M(T)$:
\[\chi_T(g) = \begin{cases} 
1 & \text{if $g$ fixes some cocircuit}\\
0 & \text{otherwise}. 
\end{cases}
\]
\end{definition}

Since for any group action, $g\in G$ fixes an element if and only if $hgh^{-1}$ fixes an element, we see immediately that these characters are class functions.  

It is not too clear what the analogue of a character table is, since we don't have a notion of irreducibility for tropical representations and we also don't just consider representations up to isomorphism.  We propose the following: the \emph{tropical character table} of a finite group $G$ is the table of character values whose columns are labelled by the conjugacy classes of $G$ and whose rows are labelled by the subrepresentations of the Boolean regular representation $\B[G]$.

Admittedly, we are not sure whether this is an appropriate and/or useful extension of character theory to the tropical setting.  We leave it as future work to uncover the significance of these characters and their tables, if there is such to be found; for now we simply compute some examples.  In what follows, we use the notation from the preceding classification of subrepresentations for groups of small order.

\[
\left[\begin{array}{c|cccc}\Z_4 & 0 & 1 & 2 & 3 \\ \hline 
U_{1,\Z_4} & 1 & 1 & 1 & 1 \\
e_{01}+e_{12}+e_{23}+e_{03} & 1 & 0 & 1 & 0 \\
U_{2,\Z_4} & 1 & 0 & 0 & 0 \\
U_{3,\Z_4} & 1 & 0 & 1 & 0
\end{array}\right]
\]

\vspace{0.1in}

\[
\left[\begin{array}{c|cccc}\Z_2\times\Z_2 & (0,0) & (1,0) & (0,1) & (1,1) \\ \hline 
U_{1,\Z_2\times\Z_2} & 1 & 1 & 1 & 1 \\
e_{(0,0)}\wedge e_{(1,0)} + e_{(0,1)}\wedge e_{(1,1)} + e_{(0,0)}\wedge e_{(0,1)} + e_{(1,0)}\wedge e_{(1,1)} & 1 & 0 & 0 & 1 \\
e_{(0,0)}\wedge e_{(1,0)} + e_{(0,1)}\wedge e_{(1,1)} + e_{(0,0)}\wedge e_{(1,1)} + e_{(1,0)}\wedge e_{(0,1)} & 1 & 0 & 1 & 0 \\
e_{(0,0)}\wedge e_{(0,1)} + e_{(1,0)}\wedge e_{(1,1)} + e_{(0,0)}\wedge e_{(1,1)} + e_{(1,0)}\wedge e_{(0,1)} & 1 & 1 & 0 & 0 \\
U_{2,\Z_2\times\Z_2} & 1 & 0 & 0 & 0 \\
U_{3,\Z_2\times\Z_2} & 1 & 1 & 1 & 1 \\
\end{array}\right]
\]

\vspace{0.1in}

\[
\left[\begin{array}{c|ccccc}\Z_5 & 0 & 1 & 2 & 3 & 4\\ \hline 
U_{1,\Z_5} & 1 & 1 & 1 & 1 & 1\\
U_{2,\Z_5} & 1 & 0 & 0 & 0 & 0\\
U_{3,\Z_5} & 1 & 0 & 0 & 0 & 0\\
U_{4,\Z_5} & 1 & 0 & 0 & 0 & 0\\
\end{array}\right]
\]

\vspace{0.1in}

Let us work through the example of $\Z_2\times \Z_2$ to show how these computations are done.  First, recall that the cocircuits of a matroid are the minimal dependent sets of the dual matroid (and the bases of the dual matroid are the complements of the bases of the original matroid).  For any group $G$ the only dimension one subrepresentation is $U_{1,G}$, which has dual matroid $U_{|G|-1,G}$ and hence a unique cocircuit $\sum_{g\in G}e_g$; since this cocircuit is fixed by all $g\in G$, the first row of any tropical character table is all 1s.  (The first column is trivially all 1s since the identity element clearly fixes a cocircuit.)  The matroid $U_{2,\Z_2\times\Z_2}$ is self-dual so its cocircuits are the sums of triples of $e_g$; one then checks that no triple is fixed by any non-identity element.  On the other hand, the dual of $U_{3,\Z_2\times\Z_2}$ is $U_{1,\Z_2\times\Z_2}$ so the cocircuits are the sums of pairs, and we have that
\begin{eqnarray*}
e_{(1,0)}\text { fixes }e_{(0,1)} + e_{(1,1)},\\
e_{(0,1)}\text{ fixes }e_{(1,0)} + e_{(1,1)},\\
e_{(1,1)}\text{ fixes }e_{(0,0)} + e_{(1,1)},
\end{eqnarray*}
so this row of the table is all 1s.  Next, consider the first of the non-uniform matroid rows listed above in the table for $\Z_2\times \Z_2$.  This matroid is self-dual, and the cocircuits are $e_{(0,0)} + e_{(1,1)}$ and $e_{(0,1)} + e_{(1,0)}$, as well as all triples that do not contain either of these pairs.  We already noted that none of the triples are fixed by a non-identity element, so we just need to check these two size two cocircuits: $e_{(0,1)}$ does not fix either of them, nor does $e_{(1,0)}$, but $e_{(1,1)}$ does fix $e_{(0,0)} + e_{(1,1)}$.  The next non-uniform matroid row in the table is also self-dual, with cocircuits $e_{(0,0)} + e_{(0,1)}$ and $e_{(1,0)} + e_{(1,1)}$ in addition to the triples not containing these; the first of these is fixed by $e_{(0,1)}$.  Finally, the third non-uniform matroid row is obtained from the second by applying the involution that swaps the first and second factors of $\Z_2\times \Z_2$.

\bibliographystyle{amsalpha}
\bibliography{bib}
\end{document}